\documentclass[11pt]{article}
\usepackage[numbers,sort&compress]{natbib}
\usepackage{enumerate}
\usepackage{amscd}
\usepackage{amsmath}
\usepackage{latexsym}
\usepackage{amsfonts}
\usepackage{setspace}
\usepackage{amssymb}
\usepackage{amsthm}
\usepackage{verbatim}
\usepackage{mathrsfs}
\usepackage{enumerate}
\usepackage[hypertexnames=false]{hyperref}
\usepackage[utf8]{inputenc}
\usepackage{amsmath, amssymb, pgfplots}
\pgfplotsset{compat = newest, width = 12cm, height =12cm}
\usepackage{tikz}
\usepackage{caption}

\theoremstyle{plain}\newtheorem{definition}{Definition}[section]
\theoremstyle{definition}\newtheorem{theorem}{Theorem}[section]
\theoremstyle{definition}\newtheorem{lemma}[theorem]{Lemma}
\theoremstyle{plain}
\theoremstyle{plain}
\theoremstyle{definition}\newtheorem{remark}{Remark}[section]
\usepackage{xcolor}

\newcommand{\mr}{\mathbb{R}}

\newcommand{\dd}{\mathrm{~d}}

\newcommand{\diver}{\text{div}\,}

\newcommand{\define}{\stackrel{\mathrm{def}}{=}}

\allowdisplaybreaks

\numberwithin{equation}{section}
\begin{document}
	%%%%%%%%%%%%%%%%%%%%%%%%%%%%%%%%%%%%%%%%%%%%%%%%%%%%%%%%%%%%%%%%%%%%%%%%%%%%%%%%%%%%%%%%%%%%%%%%%%%%
	\title{Non-uniform Continuity for the MHD equations with only Magnetic Diffusion}
	\author{Quansen Jiu\footnote{School of Mathematical Sciences, Capital Normal University, Beijing, 100048, P. R. China. Email: jiuqs@cnu.edu.cn}~~~\,\,\,\,Yaowei Xie\footnote{School of Mathematical Sciences, Capital Normal University, Beijing, 100048, P. R. China. Email: mathxyw@163.com}}
	\date{}
	\maketitle
	\begin{abstract}
		In this paper, we prove the non-uniform continuity of the data-to-solution map for the incompressible magnetohydrodynamic (MHD) equations with only magnetic diffusion in Sobolev spaces $H^s(\mathbb{R}^d)$ for all $s>0$ and  $d=2,3$.  Our results are first studies on the  non-uniform continuity of the data-to-solution map for the resistive MHD equations.   Moreover, our results permit the solution perturbation around an arbitrary constant background magnetic fields $\mathbf{B_0} \in \mathbb{R}^d$, which reveal that the strong magnetic  background fields may provide the stabilization effect but still preserve the  analytical feature of non-uniform continuity of the data-to-solution map.
	\end{abstract}
	\noindent {\bf MSC(2020):}\quad 35B30, 35Q35, 76W05.
	\vskip 0.02cm
	\noindent {\bf Keywords:} Non-uniform continuity, the resistive MHD equations, background magnetic fields.

	\tableofcontents
	
	%%%%%%%%%%
	\section{Introduction}
	\,\,\,\,\,\,\,\, In this paper, we consider the Cauchy problem for the incompressible MHD equations with only magnetic diffusion (hereinafter called the resistive MHD equations):
\begin{align}\label{mhd}
	\begin{cases}
	\partial_t u+u \cdot \nabla u+\nabla p=b\cdot \nabla b,\\[1mm]
\partial_t b-\Delta b+u\cdot \nabla b=b\cdot\nabla u,  \\[1mm]
\text{div}\, u=\text{div}\, b=0, \\[1mm]
u(x, 0)=u_0(x),\,\, b(x, 0)=b_0(x),
	\end{cases}
\end{align}
where $u(x,t) \colon \mathbb{R}^d \times \mathbb{R}_+ \to \mathbb{R}^d$,
$b(x,t) \colon \mathbb{R}^d \times \mathbb{R}_+ \to \mathbb{R}^d$,
and $p(x,t) \colon \mathbb{R}^d \times \mathbb{R}_+ \to \mathbb{R}$
denote the velocity field, magnetic field, and pressure field, respectively,
with $x \in \mathbb{R}^d~(d=2,3)$ and $t>0$. Clearly, when $b\equiv0$, \eqref{mhd} reduces to the classical Euler equations.

The resistive MHD equations provide a fundamental framework for modeling key plasma phenomena where finite electrical resistivity plays a critical role, particularly in astrophysical magnetic reconnection processes that govern energy release in solar flares and magnetospheric activity, as well as in the geodynamo mechanisms responsible for generating and sustaining planetary magnetic fields \cite{priest-2000,roberts1967introduction}.

 There have been a large number of mathematical studies on the  well-posedness theory for the incompressible MHD equations under various assumptions on velocity dissipation and magnetic diffusion.
Previous results on local well-posedness can be found in \cite{sermange-1983,jiu-2006-local,li-local-nonresistive-2017-adv,chemin-local-nonresistive-2016-adv,fefferman-local-nonressitive-2014-jfa,fefferman-local-nonressitive-2017-arma}. The pioneering work of Bardos, Sulem and Sulem \cite{full-not-bardos} and Lin and Zhang \cite{lin-2014-GlobalSmallSolutions} on perturbation theory near constant background magnetic fields first revealed the stabilizing role of such equilibrium configurations in the MHD equations. Physical interpretations of this stabilization effect are discussed in \cite{background-phy1,background-phy2}. These foundational results have inspired extensive research on global well-posedness and stability of the MHD equations, as documented in \cite{lei-bkm-2009-dcds,full-1-duvaut-lions,sermange-1983,cao-wu-1-mix,cao-wu-2-mix,full-not-bardos,lin-2014-GlobalSmallSolutions,chen-2022-3dmhd-Diophant,deng-zhang-2018-decay,panGlobalClassicalSolutions2018,ren2014global,zhang-2016-non-resistive-global-jde,xie-2024-cvpde-dio,wei--global-resistive-2020-cmr,zhou-zhu-glbal-symmetry-jmp-2018,ye-global-resistive-2022-acta} and references therein.

  In recent years, ill-posedness theory  for the incompressible MHD equations has also been invetigated. For the non-resistive MHD equations  which contain velocity dissipation but  no magnetic diffusion, Chen, Nie and Ye \cite{chen-ye-sharp-ill-nonresistive-2024-jfa}  established sharp strong ill-posedness results that provide a striking contrast to the local well-posedness theory developed by Fefferman et al. \cite{fefferman-local-nonressitive-2017-arma}. For the resistive case \eqref{mhd}, Wu and Zhao \cite{wu-zhao-mild-resistive-2023-IMRN}  obtained mild ill-posedness results near the background magnetic field $(1,0)$ in $\mr^2$.

%The global well-posedness results established in \cite{full-not-bardos,lin-2014-GlobalSmallSolutions} and related works consistently require the velocity field to be sufficiently small while maintaining the magnetic field close to a non-zero background state. This persistent structural condition across studies strongly indicates that background magnetic fields may play a stabilizing role in the MHD equations, potentially through dispersive effects or nonlinearity suppression mechanisms.

It is noted that the concept of ill-posedness manifests rather strongly in many fundamental cases. As evidenced by \cite{chen-ye-sharp-ill-nonresistive-2024-jfa,wu-zhao-mild-resistive-2023-IMRN} and other works of PDE systems, such behavior typically occurs in critical or supercritical function spaces, while  some problems remain unresolved to this day. These substantial difficulties have motivated researchers to consider relaxed notions of ill-posedness by examining weaker properties.
 For certain equations, the solution operator may exhibit non-uniform continuity properties under stronger topological frameworks, which provides meaningful insights into the refined continuity structure of solution mappings and can be seen a kind of instability of the solutions or ill-posedness the equations.

In this paper, we are concerned with the non-uniform continuity properties of the data-to-solution map for the resistive MHD equations. We begin by precisely defining the notion of non-uniform continuity as follows:
\begin{definition}\label{define-nonuniform}
	Let $X$ be a Banach space, and consider the Cauchy problem:
	\begin{align*}
		\begin{cases}
			\partial_t v = N(v), \\
			v(0) = v_0,
		\end{cases}
	\end{align*}
	where $N$ is a (possibly nonlinear) differential operator. The \textbf{data-to-solution map} $\Phi_t \colon X \to X$ (for fixed $t > 0$) defined by $\Phi_t(v_0) = v(t)$ is said to be \textbf{non-uniformly continuous} on $X$ if the following holds:
	
	\noindent
	\textbf{Sequential Formulation:} \\
	For every $t > 0$, there exists $\epsilon_0 > 0$ and sequences $\{v_{1,n}(0)\}, \{v_{2,n}(0)\} \subset X$ such that:
	\begin{align*}
		\lim_{n \to \infty} \| v_{1,n}(0) - v_{2,n}(0) \|_X = 0,
	\end{align*}
	but
	\begin{align*}
		\limsup_{n \to \infty} 	\| \Phi_t(v_{1,n}(0)) - \Phi_t(v_{2,n}(0)) \|_X \geq \epsilon_0.
	\end{align*}
	
	\noindent
	\textbf{$\delta$-$\epsilon$ Formulation (Equivalent):} \\
	For every $t > 0$, there exists $\epsilon_0 > 0$ such that for any $\delta > 0$, one can find initial data $v_1(0), v_2(0) \in X$ satisfying:
	\begin{align*}
		\| v_1(0) - v_2(0) \|_X < \delta,
	\end{align*}
	but
	\begin{align*}
		\| \Phi_t(v_1(0)) - \Phi_t(v_2(0)) \|_X \geq \epsilon_0.
	\end{align*}
\end{definition}

In \cite{nonuniform-2010-cmp-himonas}, Himonas and Misiołek first  proved non-uniform continuity of the data-to-solution map  on the  incompressible Euler equations in both $H^s(\mathbb{R}^d)$ ($s > 0$) and $H^r(\mathbb{T}^d)$ ($r \in \mathbb{R}$) with $d = 2,3$, which was later extended by Li and Bourgain \cite{nonuniform-2019-cmp-bourgain-li} through Galilean boost techniques to the endpoint case $s \geq 0$ in $H^s(\mathbb{R}^d)$, where they further proved the stronger property of nowhere uniform continuity.
For the non-resistive MHD equations with only velocity dissipation, recent work by Li, Yin, and Zhu \cite{nonuniforem-2023-adv-li-yin-zhu} demonstrated non-uniform continuity in $H^s(\mathbb{R}^d)$ for $s > \frac{d}{2}$.

  We will prove in this paper the non-uniform continuity of the data-to-solution map for the resistive MHD equations in Sobolev spaces $H^s(\mathbb{R}^d)$ for all $s>0$ and  $d=2,3$. In comparison with the work by Li, Yin, and Zhu \cite{nonuniforem-2023-adv-li-yin-zhu} which is for non-resistive MHD equations with only velocity dissipation and in $H^s(\mathbb{R}^d)$ for $s > \frac{d}{2}$,  our results hold for the resistive MHD equations and  in  $H^s(\mathbb{R}^d)$ for all $s>0$ and  $d=2,3$.  Moreover, our results permit the solution perturbation around an arbitrary constant background magnetic fields $\mathbf{B_0} \in \mathbb{R}^d$, which shows that the strong magnetic field may provide the stabilization effect but no help for the uniform continuity of the data-to-solution map.

Our main results can be stated as follows:
\begin{theorem}\label{thm}
	Let $s>0, \lambda>0, d=2,3$ and $T>0$. Then the data-to-solution map $(u_0,b_0)\mapsto(u,b)$ for the equations \eqref{mhd} is non-uniformly continuous from a bounded subset in $H^{s}(\mr^d)\times H^{s}(\mr^d)$ into $C\left([0,T],H^{s}(\mr^d)\right)\times C\left([0,T],H^{s}(\mr^d)\right)$.

  More precisely, for any $\gamma>0$ and arbitrary constant magnetic field ${\bf B_0}\in \mr^d$,  there exists two sequences of solutions $(u^{+1,\lambda},b^{+1,\lambda})$ and $(u^{-1,\lambda},b^{-1,\lambda})$ such that
	\begin{itemize}
		\item the solutions satisfy
\begin{align*}
			(u^{\pm 1,\lambda}, b^{\pm 1,\lambda} - {\bf B_0}) \in C([0,T], H^{s}(\mathbb{R}^{d})) \times C([0,T], H^{s}(\mathbb{R}^{d}));
\end{align*}
		\item  the initial data satisfy
\begin{equation}\label{nonuniform-initial-bound}
	\begin{array}{l}
							\|u^{+1,\lambda}(0,\cdot)\|_{H^{s}} + \|b^{+1,\lambda}(0,\cdot) - {\bf B_0}\|_{H^{s}} \leq \gamma, \\[2mm]
			\|u^{-1,\lambda}(0,\cdot)\|_{H^{s}} + \|b^{-1,\lambda}(0,\cdot) - {\bf B_0}\|_{H^{s}} \leq \gamma;
	\end{array}
\end{equation}
		\item and the non-uniform continuity is characterized by
		\begin{itemize}
			\item at initial time $t=0$,
			\begin{align}\label{nonuniform-initial-diff}
				\lim_{\lambda\to\infty} \|u^{+1,\lambda}(0,\cdot) - u^{-1,\lambda}(0,\cdot)\|_{H^{s}} = 0,
			\end{align}
			\item for evolution times $t>0$,
			\begin{align}\label{nonuniform-low-bound-sint}
				\lim_{\lambda \to\infty} \|u^{+1,\lambda}(t) - u^{-1,\lambda}(t)\|_{H^{s}} \geq c\gamma |\sin t|,
			\end{align}
		\end{itemize}
		where $c = c(s,d) > 0$ is a constant depending only on  $s$ and $d$.
	\end{itemize}
\end{theorem}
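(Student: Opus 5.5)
Following Himonas--Misio\l{}ek \cite{nonuniform-2010-cmp-himonas}, I will build approximate solutions of the form ``low-frequency shear flow plus high-frequency oscillation'' and then show that the actual solutions stay close to them. Fix $\omega\in\{\pm1\}$ and $\lambda\gg1$. In $d=2$ (with $d=3$ handled by a third trivial component) the velocity ansatz is
\begin{align*}
u^{\omega,\lambda}_{ap}(x,t)=\omega\lambda^{-1}\phi\Big(\frac{x}{\lambda}\Big)\mathbf e_1+\lambda^{-s}\phi\Big(\frac{x}{\lambda^{\delta}}\Big)\cos\big(\lambda x_2-\omega t\big)\mathbf e_1 ,
\end{align*}
with $\phi\in C_c^\infty$ radial, $0<\delta<1$, and a divergence correction if needed. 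The low-frequency part has $H^s$ norm $O(\lambda^{-1+d/2})$, so it is small for $d\le 2$; in $d=3$ I would use $\lambda^{-1}\phi(x/\lambda^{\delta})$ with $\delta$ small instead. The magnetic ansatz is $b_{ap}=\mathbf B_0$, and the high-frequency part of $u$ is $O(\gamma)$ in $H^s$ after rescaling by $\gamma$. At $t=0$ the two data differ only in the low-frequency term, whose $H^s$ norm tends to $0$; this gives \eqref{nonuniform-initial-diff}. For $t>0$ the difference of the high parts is
\[
\lambda^{-s}\phi(x/\lambda^\delta)\bigl[\cos(\lambda x_2-t)-\cos(\lambda x_2+t)\bigr]=2\lambda^{-s}\phi\sin(\lambda x_2)\sin t,
\]
whose $H^s$ norm tends to $c\gamma|\sin t|$. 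The magnetic diffusion plays no role in the ansatz. Since $b\equiv\mathbf B_0$ together with $u$ a shear flow in direction $\mathbf e_1$ depending on $x_2$ is nearly an exact solution, the terms $\mathbf B_0\cdot\nabla u$ in the $b$-equation must be treated carefully. If $\mathbf B_0$ has an $x_2$-component, then $\mathbf B_0\cdot\nabla u$ has size $\lambda\cdot\lambda^{-s}$ and would generate magnetic field. To avoid this I would rotate the frame so that the oscillation direction is orthogonal to $\mathbf B_0$: choose a wave vector $k\perp\mathbf B_0$ and velocity direction $\perp k$. In $d=2$ this forces the velocity direction to be parallel to $\mathbf B_0$, and then $\mathbf B_0\cdot\nabla u$ involves only derivatives along $\mathbf B_0$ of the slowly varying envelope, costing $\lambda^{-\delta}$ and not $\lambda$.

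Next I compute the residual. Substituting $(u_{ap},b_{ap})$ into \eqref{mhd} gives errors $E_u$ and $E_b$. The main $u$-error comes from the transport by the low mode, $u_l\cdot\nabla u_h$, which is cancelled to leading order by the phase $\omega t$: near the support, $\omega\lambda^{-1}\phi(x/\lambda)\approx\omega\lambda^{-1}$, and transport by this constant times $\lambda$ produces exactly $-\omega t$. This is the classical mechanism. What remains is the envelope variation, of size $\lambda^{-1}\cdot\lambda^{\delta-1}\cdot\lambda\cdot\lambda^{-s}$ in $H^{\sigma}$ weighted appropriately, together with the self-interaction $u_h\cdot\nabla u_h$, which is small because of the shear structure up to envelope derivatives $\lambda^{-\delta}$. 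The $b$-error is $\mathbf B_0\cdot\nabla u_{ap}$, which is small in $L^2$ as argued above. I would estimate $\|E\|_{L^2}$ and $\|E\|_{H^{\sigma}}$ for a large $\sigma$ and show they decay like $\lambda^{-s-\epsilon}$ and $\lambda^{\sigma-s-\epsilon}$ respectively.

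The key step, and the main obstacle, is the stability estimate for the true solution with the same data. The resistive MHD system is only locally well-posed in $H^{\sigma}$ for $\sigma>d/2+1$. Because $u$ has no dissipation, I cannot work directly in $H^s$ for small $s$. I would therefore take the data smooth, in $H^\sigma$ with large $\sigma$, which is possible since the approximate data are Schwartz. I would first show a lifespan of at least $T$ and the bound $\|(u,b-\mathbf B_0)(t)\|_{H^\sigma}\lesssim\lambda^{\sigma-s}$. The difficulty is that the Lipschitz norm is $\|\nabla u\|_{L^\infty}\sim\lambda^{1-s}$, which is large when $s<1$. This is exactly why the high-frequency amplitude must be reduced. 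I would use the small amplitude $\gamma$ together with a larger envelope width $\lambda^{\delta}$: the $H^s$ norm of $u_h$ then gains a factor $\lambda^{\delta d/2}$, so I can set the amplitude to $\lambda^{-s-\delta d/2}$, making $\|\nabla u\|_{L^\infty}\sim\lambda^{1-s-\delta d/2}$. Choosing $\delta$ close to $1$ makes $\|\nabla u\|_{L^\infty}$ bounded, which closes the estimate. The $L^2$ energy estimate for the differences $w=u-u_{ap}$ and $h=b-b_{ap}$ uses $\mathbf B_0\cdot\nabla$ antisymmetry and the cancellation of the $b\cdot\nabla b$ and $b\cdot\nabla u$ pairing. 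Discarding the dissipation gives
\[
\frac{d}{dt}\|(w,h)\|_{L^2}\lesssim\|\nabla u_{ap}\|_{L^\infty}\|(w,h)\|_{L^2}+\|E\|_{L^2}.
\]
Gronwall then yields $\|(w,h)\|_{L^2}\lesssim\lambda^{-s-\epsilon}$ on $[0,T]$. Interpolating against the $H^\sigma$ bound gives $\|(w,h)\|_{H^s}\to0$, and combining this with the explicit computation for the ansatz yields \eqref{nonuniform-low-bound-sint}.
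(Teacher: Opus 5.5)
Your overall strategy is the paper's: a Himonas--Misio\l{}ek ansatz with the high-frequency wave only in $u$, the $L^2$ stability estimate of Lemma~\ref{two-systems-E-F}, interpolation against an $H^\sigma$ bound of size $\lambda^{\sigma-s}$, and, for $\mathbf B_0\neq0$, a wave vector orthogonal to $\mathbf B_0$ so that $\|\mathbf B_0\cdot\nabla u^h\|_{L^2}\lesssim\lambda^{-s-\delta}$, as in Section~\ref{sec:4}. The gap is the low-frequency component, which does not work as you specify it.

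First, your low mode points along $\mathbf e_1$ while the phase is $\lambda x_2-\omega t$. So $u_l\cdot\nabla u_h=\omega\lambda^{-1}\phi(x/\lambda)\,\partial_{x_1}u_h$ only differentiates the envelope, and nothing cancels $\partial_t u_h=\omega\lambda^{-s}\phi(x/\lambda^\delta)\sin(\lambda x_2-\omega t)\mathbf e_1$, whose $L^2$ norm is comparable to $\|u_h\|_{L^2}$. To leading order your data then evolve as an almost steady shear whose phase does not move, and \eqref{nonuniform-low-bound-sint} is lost. The low mode must point along the wave vector. In the paper, $\phi^{l,w,\lambda}=-w\lambda^{-1+\delta}\Phi_1(x_1/\lambda^\delta)\Phi_2(x_2/\lambda^\delta)$ with $\Phi_1'=\Phi_2=1$ on the envelope gives $u^{l,w,\lambda}(x,0)=(0,w\lambda^{-1})$ there, and $u^{l}_2\,\partial_{x_2}u^h_1$ cancels the leading part of $\partial_tu^h_1$.

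Second, in $d=2$ your width-$\lambda$ profile has $\|\lambda^{-1}\phi(\cdot/\lambda)\|_{L^2(\mr^2)}=\|\phi\|_{L^2}$. So $\lambda^{-1+d/2}=1$ is not small, and \eqref{nonuniform-initial-diff} fails. The width must be $\lambda^{\delta}$ with $\delta<1$, giving $\lambda^{-1+\delta}$.

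Third, and more fundamentally, a frozen low-frequency profile together with $b_{ap}\equiv\mathbf B_0$ leaves low--low residuals whose size does not depend on $s$. Your smallness argument for $\mathbf B_0\cdot\nabla u_{ap}$ covers only the high part. Since $u_l$ is compactly supported, $\mathbf B_0\cdot\nabla u_l\neq0$ enters the induction equation with $L^2$ norm $\sim\lambda^{-1}$ in $d=2$, for any isotropic width. The non-gradient part of $u_l\cdot\nabla u_l$ is generically of size $\lambda^{-2}$ in $L^2$. Your interpolation needs $\|(w,h)\|_{L^2}=O(\lambda^{-s-\epsilon})$, so the argument breaks down for $s\ge1$ when $\mathbf B_0\neq0$, and for $s\ge2$ even when $\mathbf B_0=0$.

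The paper avoids every low--low error by letting the low-frequency pair be an exact solution of the full system, \eqref{mhd-low} resp.\ \eqref{mhd-B}. It then carries a nonzero low-frequency magnetic field: it takes $b^{w,\lambda}(0)=u^{l,w,\lambda}(0)$, and for $\mathbf B_0\neq0$ one is generated by $\mathbf B_0\cdot\nabla u^l$ anyway. The high--low term $b\cdot\nabla u^h$ in the induction equation is then of size $\|b\|_{L^\infty}\|\nabla u^h\|_{L^2}\sim\lambda^{-1}\lambda^{1-s}$, which is not $o(\lambda^{-s})$. The paper handles it by writing the residual as $\mathrm{div}(u^h\otimes b-b\otimes u^h)$, whose tensor is $O(\lambda^{-s-1})$ in $L^2$, and absorbing the divergence into the resistive term $\|\nabla\xi\|_{L^2}^2$ in Lemma~\ref{two-systems-E-F}. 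The dissipation you propose to discard is exactly what the paper uses.

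Finally, your point that the uniform $H^\sigma$ bound on $[0,T]$ must come from Lipschitz control is fair. As stated, the lifespan in Lemma~\ref{local-well-pri} is of order $\lambda^{s-m}$ for these data. But ``bounded $\|\nabla u_{ap}\|_{L^\infty}$ closes the estimate'' is only an assertion. It does not by itself bound $\|\nabla u(t)\|_{L^\infty}$ for the exact solution on $[0,T]$.
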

\begin{remark}
	The stabilizing role of background magnetic fields in the MHD equations has been well studied in previous works \cite{lin-2014-GlobalSmallSolutions,full-not-bardos}. Our analysis reveals that despite the stabilization effect of the  background magnetic fields, the data-to-solution map for the resistive MHD equations \eqref{mhd} maintains its non-uniform continuity property for arbitrary non-zero constant fields ${\bf B_0}\neq {\bf 0}$.
\end{remark}

To simplify the notations, we write the perturbed system as follows
\begin{align}\label{mhd-B}
	\begin{cases}
		\partial_t u+u \cdot \nabla u+\nabla p=b\cdot \nabla b+{\bf B_0}\cdot\nabla b,\\[1mm]
		\partial_t b-\Delta b+u\cdot \nabla b=b\cdot\nabla u+{\bf B_0}\cdot\nabla u,  \\[1mm]
		\text{div}\, u=\text{div}\, b=0, \\[1mm]
		u(x, 0)=u_0(x),\,\, b(x, 0)=b_0(x),
	\end{cases}
\end{align}
where we denote $b - {\bf B_0}$ by $b$ for convenience.

We now explain the main ideas of the proof of  Theorem \ref{thm}. It is recalled that  Himonas and Misiołek  \cite{nonuniform-2010-cmp-himonas} introduced a frequency decomposition to construct  approximate solutions of the two-dimensional incompressible Euler equations. Denote
 \begin{align}\label{1.1}
	u^{\pm1,\lambda}(x,t)=u^{l,\pm1,\lambda}(x,t)+u^{h,\pm1,\lambda}(x,t),
\end{align}
where
\begin{itemize}
	\item[(i)] the low-frequency components $u^{l,\pm1,\lambda}$ are  obtained by solving the Euler equations with  low-frequency initial data;
	\item[(ii)] the high-frequency components $u^{h,\pm1,\lambda}$ are explicitly constructed by using  oscillatory profiles, with frequency parameter $\lambda$ controlling the spatial concentration,
	\begin{align}\label{1.2}
		u^{h,\pm1,\lambda}(x,t)=\nabla^\perp \left(\lambda^{-\delta-s-1}\phi\left(\dfrac{x_1}{\lambda^\delta}\right)\phi\left(\dfrac{x_2}{\lambda^\delta}\right)\sin (\lambda x_2\mp t)\right).
	\end{align}
Here, $\lambda>0, \max\{1-s,0\}<\delta < 1$ and $\phi\in C_c^\infty(\mr)$ with $\text{supp}\,\phi \subset [-2,2]$ and $\phi(x) \equiv 1$ on $|x| < 1$.
\end{itemize}

It is required to modify the construction of the approximate solutions in the presence of the magnetic field. To the resistive MHD equations, due to the diffusive nature of the magnetic field evolution (versus the purely transport equations of the Euler equations ), we restrict high-frequency perturbations to the velocity field alone and  maintain low-frequency components for the magnetic field. More precisely, in the case $\mathbf{B_0} = \mathbf{0}$ (no background magnetic field) and $d=2$,  we set
\begin{itemize}
	\item the low-frequency pairs $(u^{l,\pm1,\lambda}, b^{\pm1,\lambda})$ by solving the resistive MHD equations \eqref{mhd} (equivalently, system \eqref{mhd-B} with $\mathbf{B_0} = \mathbf{0}$) with low-frequency initial data;
	\item the high-frequency velocity components $u^{h,\pm1,\lambda}$ by retain their profiles as in  \eqref{1.2}.
\end{itemize}

In the three-dimensional case $(d = 3)$, we modify  the high-frequency velocity components as
	\begin{align}\label{1.3}
	u^{h,\pm1,\lambda}(x,t)=\begin{pmatrix}
		\partial_{x_1}\\[1mm]
		-\partial_{x_2}\\[1mm]
		0
	\end{pmatrix} \left(\lambda^{-\delta-s-1}\phi\left(\dfrac{x_1}{\lambda^\delta}\right)\phi\left(\dfrac{x_2}{\lambda^\delta}\right)\sin (\lambda x_2\mp t)\phi(x_3)\right).
\end{align}

The error terms induced by the  high-frequency  $u^{h,\pm1,\lambda}$   can be written as
\begin{align}\label{1.4}
	E^{\pm1,\lambda}&=\partial_t u^{h,\pm1,\lambda}+u^{l,\pm1,\lambda}\cdot\nabla u^{h,\pm1,\lambda}+u^{h,\pm1,\lambda}\cdot\nabla u^{h,\pm1,\lambda}+u^{h,\pm1,\lambda}\cdot\nabla u^{l,\pm1,\lambda},\\\label{1.5}
	F^{\pm1,\lambda}&=u^{h,\pm1,\lambda}\cdot\nabla b^{\pm1,\lambda}-b^{\pm1,\lambda}\cdot\nabla u^{h,\pm1,\lambda},
\end{align}
respectively.

Direct estimation of \eqref{1.5} will lead to uncontrolled error terms. Our alternative approach consists of three key steps:
\begin{enumerate}
	\item {\bf Divergence reformulation:} We express $F^{\pm1,\lambda} =\diver\, \tilde{F}^{\pm1,\lambda}$ and establish estimates for $\tilde{F}^{\pm1,\lambda}$;
	\item {\bf Integration by parts:} In $L^2$ inner product computations, we transfer the divergence operator to $b^{\pm1,\lambda}$ via integration by parts;
	\item {\bf Diffusion cancellation:} The resulting terms are precisely canceled by exploiting the magnetic diffusion term $\Delta b^{\pm1,\lambda}$.
\end{enumerate}

For non-zero background fields $\mathbf{B_0}=(B_1,B_2) \neq \mathbf{0}$, we introduce the coordinate transformation
\begin{align*}
	x_2 \mapsto B_1x_2 - B_2x_1,
\end{align*}
which maintains all norm estimates while achieving exact cancellation of the dominant high-frequency linear term $\mathbf{B_0} \cdot \nabla u^{h,\pm 1,\lambda}$ appearing in the error analysis of $F^{\pm1,\lambda}$.

A key observation is that the directional derivative exhibits a better decay:
\begin{align*}
	\left\|\mathbf{B_0} \cdot \nabla u^{h,\pm 1,\lambda}\right\|_{L^2}\leq C\lambda^{-s-\delta} \ll \left\|u^{h,\pm 1,\lambda}\right\|_{L^2}\leq C\lambda^{-s}, \text{~for~}\lambda \to \infty,
\end{align*}
where $\delta>0$ represent the improved decay rate.

	 The paper is organized as follows. section \ref{sec:2} is the preliminary analytical framework, in which subsection \ref{sec:2.1} is about key Lemmas and subsection \ref{sec:2.2} is on the local well-posedness theory. The core  strategy of the proof is implemented in section \ref{sec:3} (zero background field case, $\mathbf{B_0} = \mathbf{0}$) and section  \ref{sec:4} (non-zero background field case, $\mathbf{B_0} \neq \mathbf{0}$), following the approach as follows: construction of approximate solutions in subsections \ref{sec:3.1} and \ref{sec:4.1}, precise estimation of approximation errors in subsections \ref{sec:3.2} and \ref{sec:4.2}, rigorous construction of exact solutions in subsections \ref{sec:3.3} and \ref{sec:4.3}, finishing the proof of Theorem in the case $\mathbf{B_0} = \mathbf{0}$ and $\mathbf{B_0} \neq \mathbf{0}$ in subsections \ref{sec:3.4} and \ref{sec:4.4}, respectively.

{\bf Notations:}
\begin{enumerate}
\item {\bf Function Spaces:} Throughout this work, let $X$ denote a Banach space equipped with norm $\|\cdot\|_X$. Since all function spaces considered here are defined on $\mathbb{R}^d$ for $d = 2,3$, we will suppress the domain $\mathbb{R}^d$ in our notation unless otherwise specified.
	\item {\bf Differential Operators:} For $x \in \mathbb{R}^2$, we define the perpendicular gradient operator as $\nabla^\perp := (\partial_{x_2}, -\partial_{x_1})$.
	\item {\bf Joint Norms:} Given functions $f, g \in X(\mathbb{R}^d)$, we define their joint norm by
\begin{align*}
	\|f(\cdot),g(\cdot)\|_{X(\mr^d)}=	\|f(\cdot)\|_{X(\mr^d)}+	\|g(\cdot)\|_{X(\mr^d)}
\end{align*}
adopting this concise notation for simplicity.
\end{enumerate}

\section{Preliminaries}\label{sec:2}
\subsection{Auxiliary Analysis Tools}\label{sec:2.1}
In this subsection, we introduce two key lemmas needed later. The first one is
\begin{lemma}\cite{nonuniform-2010-cmp-himonas}\label{phi-cmp-2010}
	Let $\sigma \geq 0, \delta\geq 0, a\in \mr$ and $\lambda\gg 1$. For any Schwartz function $\psi\in \mathcal{S}(\mr)$, it holds that
	\begin{align*}
		\lambda^{\delta/2}\left\|\psi\right\|_{L^2(\mathbb{R})}\leq\left\|\psi\left(\frac{\cdot}{\lambda^\delta}\right)\right\|_{H^\sigma(\mathbb{R})}\leq\lambda^{\delta/2}\left\|\psi\right\|_{H^\sigma(\mathbb{R})},
	\end{align*}
and \begin{align}\label{622}
	\left\|\psi\left(\frac{\cdot}{\lambda^\delta}\right)\cos(\lambda\cdot-a)\right\|_{H^\sigma(\mathbb{R})}\simeq\lambda^{\sigma+\delta/2}\|\psi\|_{L^2(\mathbb{R})}.
\end{align}
Moreover, \eqref{622}  holds true if  $\cos(\lambda \cdot-a)$ is replaced by $\sin(\lambda \cdot-a)$.
\end{lemma}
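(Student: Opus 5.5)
We work on the Fourier side with Plancherel and the weight $\langle\xi\rangle^{2\sigma}$, treating the two estimates separately; the constants are then tracked through a change of variables.

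\textbf{Scaling estimate.} Put $\psi_\lambda(x)=\psi(x/\lambda^\delta)$. Then $\widehat{\psi_\lambda}(\xi)=\lambda^\delta\hat\psi(\lambda^\delta\xi)$, so
\begin{align*}
\|\psi_\lambda\|_{H^\sigma}^2=\lambda^{2\delta}\int_{\mr}(1+\xi^2)^\sigma|\hat\psi(\lambda^\delta\xi)|^2\,d\xi=\lambda^{\delta}\int_{\mr}(1+\lambda^{-2\delta}\eta^2)^\sigma|\hat\psi(\eta)|^2\,d\eta .
\end{align*}
Since $\lambda\geq1$, $\delta\ge0$ and $\sigma\ge0$, we have $1\le(1+\lambda^{-2\delta}\eta^2)^\sigma\le(1+\eta^2)^\sigma$. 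This gives both inequalities $\lambda^{\delta/2}\|\psi\|_{L^2}\le\|\psi_\lambda\|_{H^\sigma}\le\lambda^{\delta/2}\|\psi\|_{H^\sigma}$ directly.

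\textbf{Modulated estimate.} Write
\[
\cos(\lambda x-a)=\tfrac12\bigl(e^{i(\lambda x-a)}+e^{-i(\lambda x-a)}\bigr).
\]
The Fourier transform of $\psi_\lambda\cos(\lambda\cdot-a)$ is then $\tfrac12\bigl(e^{-ia}\widehat{\psi_\lambda}(\xi-\lambda)+e^{ia}\widehat{\psi_\lambda}(\xi+\lambda)\bigr)$.

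For the upper bound, use $(1+\xi^2)^{\sigma}\lesssim(1+\lambda^2)^{\sigma}(1+(\xi\mp\lambda)^2)^{\sigma}$ (Peetre's inequality) and then the scaling computation above. This yields a bound of order $\lambda^{\sigma+\delta/2}\|\psi\|_{H^\sigma}$. To get a bound in terms of $\|\psi\|_{L^2}$ instead, I would split the integral into the region $|\xi\mp\lambda|\le\lambda/2$ and its complement:
\begin{itemize}
\item On $|\xi\mp\lambda|\le\lambda/2$, the weight is $\simeq\lambda^{2\sigma}$, which gives $\lambda^{2\sigma+\delta}\|\psi\|_{L^2}^2$.
\item On the complement, $\lambda^{\delta}|\eta|\gtrsim\lambda$ with $\eta=\lambda^\delta(\xi\mp\lambda)$. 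The rapid decay of $\hat\psi$ (Schwartz) makes this piece $O(\lambda^{-N})$ for $\delta<1$.
\end{itemize}
Here the constants depend on $\psi$; the expression ``$\simeq$'' in \eqref{622} is understood in that sense, with $\lambda\gg1$.

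For the lower bound, restrict to the region $|\xi-\lambda|\le\lambda/2$, where $(1+\xi^2)^\sigma\gtrsim\lambda^{2\sigma}$. There the squared modulus is
\[
\tfrac14\bigl|\widehat{\psi_\lambda}(\xi-\lambda)+e^{2ia}\widehat{\psi_\lambda}(\xi+\lambda)\bigr|^2 .
\]
The second term is concentrated near $-\lambda$, so on this region it is $O(\lambda^{-N})$ by Schwartz decay when $\delta<1$. What remains is $\gtrsim\lambda^{2\sigma+\delta}\int_{|\eta|\le\lambda^{1+\delta}/2}|\hat\psi|^2\,d\eta\to\lambda^{2\sigma+\delta}\|\psi\|_{L^2}^2\cdot c$.

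\textbf{Main obstacle.} The hardest step is controlling the cross term and the tails uniformly, which is where the condition $\lambda\gg1$ and $\delta<1$ enter. If $\delta\ge1$ is allowed, I would instead use the cruder bound that the two frequency bumps, of width $\lambda^{-\delta}$, still separate because they are centered at $\pm\lambda$. The sine case is identical, since $\sin(\lambda x-a)=\cos(\lambda x-a-\pi/2)$.
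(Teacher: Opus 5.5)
Your argument is correct and is essentially the Fourier-side proof from the cited Himonas--Misio{\l}ek paper; the present paper only quotes the lemma and gives no proof of its own. They expand the square exactly, rescale $\eta=\lambda^\delta(\xi\mp\lambda)$, and pass to the limit to get $\lambda^{-\sigma-\delta/2}\|\psi(\cdot/\lambda^\delta)\cos(\lambda\cdot-a)\|_{H^\sigma}\to\|\psi\|_{L^2}/\sqrt{2}$, whereas your splitting at $|\xi\mp\lambda|=\lambda/2$ gives two-sided bounds with $\psi$-dependent constants, which is all the statement requires.

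One correction: the restriction $\delta<1$ is unnecessary. The tail and cross-term regions are $|\eta|\geq\lambda^{1+\delta}/2$; your ``$\lambda^\delta|\eta|\gtrsim\lambda$'' should read $|\eta|\gtrsim\lambda^{1+\delta}$. These regions only move further out as $\delta$ grows, so the same argument already covers every $\delta\geq 0$ and the fallback is not needed.
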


Before stating the second lemma, we present two systems as follows.
\begin{align}\label{mhd1}
	\begin{cases}
		\partial_t u + u \cdot \nabla u + \nabla p = b \cdot \nabla b+{\bf B_0}\cdot\nabla b, \\[1mm]
		\partial_t b - \Delta b + u \cdot \nabla b = b \cdot \nabla u+{\bf B_0}\cdot \nabla u, \\[1mm]
		\diver\, u = \diver\, b = 0, \\[1mm]
		u(x, 0) = u_0(x), \quad b(x, 0) = b_0(x),
	\end{cases}
\end{align}
and
\begin{align}\label{mhd2}
	\begin{cases}
		\partial_t u^E + u^E \cdot \nabla u^E + \nabla p^E = b^F \cdot \nabla b^F +{\bf B_0}\cdot\nabla b^F+ E, \\[1mm]
		\partial_t b^F - \Delta b^F + u^E \cdot \nabla b^F = b^F \cdot \nabla u^E +{\bf B_0}\cdot \nabla u^E + \text{div}\,F_1+F_2, \\[1mm]
		\diver\, u^E = \diver\, b^F = 0, \\[1mm]
		u^E(x, 0) = u_0(x), \quad b^F(x, 0) = b_0(x),
	\end{cases}
\end{align}
where ${\bf B_0}$ denotes an arbitrary constant vector in $\mathbb{R}^d$.

 The  second lemma can be stated as

\begin{lemma}\label{two-systems-E-F}
Suppose that  $(u^E,b^F)$ and $(u,b)$ are solutions of \eqref{mhd1} and \eqref{mhd2}  defined on  the time interval $[0,T]$, respectively.  Then, the differences  $\eta \define u - u^E$ and $\xi \define b - b^F$
satisfy
\begin{align}\nonumber
	\left(\|\eta(t)\|_{L^2}+\|\xi(t)\|_{L^2}\right)\leq&t\exp\left(2t\max_{0\leq \tau \leq t}\left(\|\nabla u^E(\tau)\|_{L^\infty} + \|\nabla b^F(\tau)\|_{L^\infty}+1\right)\right) \\\label{energy_estimate}
	&\times\left(\max_{0\leq \tau \leq t}\left(\|E(\tau)\|_{L^2}^2+\|F_1(\tau)\|_{L^2}^2+\|F_2(\tau)\|_{L^2}^2\right)\right)^{\frac{1}{2}},
\end{align}
for all $t\in[0,T].$
\end{lemma}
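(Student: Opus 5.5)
We subtract \eqref{mhd2} from \eqref{mhd1} and run a standard $L^2$ energy estimate on the difference $(\eta,\xi)$. Writing $u=u^E+\eta$ and $b=b^F+\xi$, the difference satisfies
\begin{align*}
\partial_t\eta+u\cdot\nabla\eta+\eta\cdot\nabla u^E+\nabla(p-p^E)&=b\cdot\nabla\xi+\xi\cdot\nabla b^F+{\bf B_0}\cdot\nabla\xi-E,\\
\partial_t\xi-\Delta\xi+u\cdot\nabla\xi+\eta\cdot\nabla b^F&=b\cdot\nabla\eta+\xi\cdot\nabla u^E+{\bf B_0}\cdot\nabla\eta-\diver F_1-F_2,
\end{align*}
with $\eta(0)=\xi(0)=0$ and $\diver\eta=\diver\xi=0$.

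We take the $L^2$ inner product of the first equation with $\eta$ and of the second with $\xi$, then add. Since $\diver u=\diver\eta=0$, the transport terms $\int (u\cdot\nabla\eta)\cdot\eta$ and $\int (u\cdot\nabla\xi)\cdot\xi$ vanish, and so does the pressure term. The coupling terms cancel in pairs: $\int (b\cdot\nabla\xi)\cdot\eta+\int (b\cdot\nabla\eta)\cdot\xi=0$ because $\diver b=0$, and likewise $\int ({\bf B_0}\cdot\nabla\xi)\cdot\eta+\int({\bf B_0}\cdot\nabla\eta)\cdot\xi=0$. The remaining quadratic terms $\eta\cdot\nabla u^E$, $\xi\cdot\nabla b^F$, $\eta\cdot\nabla b^F$, $\xi\cdot\nabla u^E$ are bounded by $(\|\nabla u^E\|_{L^\infty}+\|\nabla b^F\|_{L^\infty})(\|\eta\|_{L^2}^2+\|\xi\|_{L^2}^2)$ up to a factor $2$ from Young's inequality.

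The forcing terms are handled as follows. For $E$ and $F_2$, Cauchy--Schwarz and Young give $\int|E\cdot\eta|+\int|F_2\cdot\xi|\le\frac12(\|E\|_{L^2}^2+\|F_2\|_{L^2}^2)+\frac12(\|\eta\|_{L^2}^2+\|\xi\|_{L^2}^2)$. The divergence term is the only point requiring care, and this is where the magnetic diffusion is used. We integrate by parts to get $-\int \diver F_1\cdot\xi=\int F_1:\nabla\xi\le \|\nabla\xi\|_{L^2}^2+\frac14\|F_1\|_{L^2}^2$, and absorb $\|\nabla\xi\|_{L^2}^2$ into the dissipation $\|\nabla\xi\|_{L^2}^2$ coming from $-\Delta\xi$.

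Setting $Y=\|\eta\|_{L^2}^2+\|\xi\|_{L^2}^2$, these bounds yield
\[
Y'\le 2\bigl(\|\nabla u^E\|_{L^\infty}+\|\nabla b^F\|_{L^\infty}+1\bigr)Y+\bigl(\|E\|_{L^2}^2+\|F_1\|_{L^2}^2+\|F_2\|_{L^2}^2\bigr),
\]
with constants adjusted harmlessly. Let $M$ denote the maximum of the bracket over $[0,t]$ and $G$ the maximum of the forcing over $[0,t]$. Gronwall with $Y(0)=0$ gives $Y(t)\le tG\,e^{2Mt}$ after multiplying by $e^{-2M\tau}$ and integrating. Hence
\[
\|\eta\|_{L^2}+\|\xi\|_{L^2}\le\sqrt{2Y}\le\sqrt{2t}\,e^{Mt}G^{1/2}.
\]

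The stated form, with prefactor $t$ and exponent $2tM$, needs a small adjustment. One option is to absorb $\sqrt{2t}\le t\,e^{Mt}$, which works for $t$ bounded below. The cleaner option is to run the estimate directly on $\sqrt{Y}$: then $\frac{d}{dt}\sqrt Y\le 2M\sqrt Y+\sqrt{G}$ after bounding forcing terms by $\sqrt G\sqrt Y$. For the $F_1$ term this requires slightly more care, namely $\|F_1\|\|\nabla\xi\|\le\|\nabla\xi\|^2+\frac14\|F_1\|^2$ and then handling $\|F_1\|^2/\sqrt Y$. This bookkeeping of constants is the only delicate point; the energy cancellations themselves are routine.
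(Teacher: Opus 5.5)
Your energy estimate is the paper's argument: the same difference system, the same cancellations (transport, pressure, and the $b\cdot\nabla$ and $\mathbf{B_0}\cdot\nabla$ pairs), the $\mathrm{div}\,F_1$ term moved onto $\nabla\xi$ and absorbed by the magnetic dissipation, and Gronwall from zero data. It correctly gives $\|\eta(t)\|_{L^2}+\|\xi(t)\|_{L^2}\le\sqrt{2t}\,e^{Mt}G^{1/2}$. The gap is the final ``adjustment'' to prefactor $t$. You call it bookkeeping, but it cannot be done.

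\begin{itemize}
\item Option 1 fails as $t\to0$: $\sqrt{2t}\le te^{Mt}$ requires $e^{Mt}\ge\sqrt{2/t}$.
\item Option 2 works for $E$ and $F_2$ but not for $F_1$. The pairing $\int F_1:\nabla\xi$ involves $\nabla\xi$, so it is not bounded by $\|F_1\|_{L^2}\sqrt{Y}$. After absorbing $\|\nabla\xi\|_{L^2}^2$ you are left with a term $c\|F_1\|_{L^2}^2/\sqrt{Y}$ in $\frac{d}{dt}\sqrt{Y}$, which is unbounded near $t=0$ because $Y(0)=0$.
\end{itemize}

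This is not a matter of constants. A forcing $\mathrm{div}\,F_1\in H^{-1}$ in the heat equation genuinely produces growth of order $\sqrt{t}\,\|F_1\|_{L^2}$, so the estimate with prefactor $t$ is false for small $t$.

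Here is a counterexample. Take $d=2$, $\mathbf{B_0}=0$, $(u,b)\equiv0$, $u^E\equiv0$, $p^E=0$ and $F_2=0$. Let $F_1=\epsilon\begin{pmatrix}0&f\\-f&0\end{pmatrix}$, independent of $t$, so $\mathrm{div}\,F_1=\pm\epsilon\nabla^\perp f$ is divergence-free. Let $b^F$ solve $\partial_t b^F-\Delta b^F=\mathrm{div}\,F_1$ with $b^F(0)=0$, and set $E=-b^F\cdot\nabla b^F$. This is an admissible pair of solutions of \eqref{mhd1} and \eqref{mhd2}.

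\begin{itemize}
\item In Fourier variables, $|\widehat{b^F}(k,t)|=\epsilon\,\frac{1-e^{-t|k|^2}}{|k|}\,|\hat f(k)|$.
\item If $\hat f$ is supported in $t_0^{-1/2}\le|k|\le 2t_0^{-1/2}$, then $\|\xi(t_0)\|_{L^2}\ge\frac{1-e^{-1}}{2}\sqrt{t_0}\,\epsilon\|f\|_{L^2}$.
\item Since $\|E\|_{L^2}=O(\epsilon^2)$ and $\|\nabla b^F\|_{L^\infty}=O(\epsilon)$, the right-hand side of the claimed estimate is $\sqrt2\,t_0e^{2t_0}\epsilon\|f\|_{L^2}(1+O(\epsilon))$.
\end{itemize}

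For $t_0=10^{-2}$ and small $\epsilon$ the lower bound exceeds the claimed upper bound, so the claim fails.

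The paper's own proof has the same defect. Its Gronwall bound $Y(t)\le\frac12 e^{4tM}\,tG$ only yields $\sqrt{t}\,e^{2tM}G^{1/2}$, and the closing ``which implies'' is valid only for $t\ge1$, whereas the paper works with $T<1$. The correct lemma is the one you actually proved, with $\sqrt{2t}$ in place of $t$, and you should stop there rather than claim the upgrade. That version is enough for Subsections \ref{sec:3.3} and \ref{sec:4.3}, which only need a $\lambda$-independent constant in front of $\lambda^{-\sigma_{s,\delta}}$.
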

\begin{proof}
It follows from \eqref{mhd1} and \eqref{mhd2} that the differences $(\eta,\xi) = (u-u^E,b-b^F)$ satisfy
	\begin{align}\label{eta,xi-equations}
		\begin{cases}
			\partial_t \eta+u \cdot \nabla \eta+\eta \cdot\nabla u^E+\nabla \tilde{p}=b\cdot \nabla \xi+\xi\cdot\nabla b^F+{\bf B_0}\cdot\nabla \xi-E,\\[1mm]
			\partial_t \xi-\Delta \xi+u\cdot \nabla \xi +\eta \cdot\nabla b^F=b\cdot\nabla \eta +\xi\cdot\nabla u^E+{\bf B_0}\cdot\nabla \eta-\text{div}\,F_1-F_2,  \\[1mm]
			\diver\, u^E=\diver\, b^F=\diver\, u=\diver\, b=\diver\, \eta=\diver\, \xi =0, \\[1mm]
			\eta (x,0)=\xi (x,0)=0,
		\end{cases}
	\end{align}
where $\tilde{p}=p-p^E$.

Taking the $L^2$-inner products of \eqref{eta,xi-equations}$_1$ and \eqref{eta,xi-equations}$_2$ with $u$ and $b$ respectively yields
\begin{align}\label{energy_identity}
	\frac{1}{2}\frac{\mathrm{d}}{\mathrm{d}t}\left(\|\eta(t)\|_{L^2}^2 + \|\xi(t)\|_{L^2}^2\right) + \|\nabla \xi(t)\|_{L^2}^2 = \sum_{i=1}^{14} I_i,
\end{align}
	where
	\begin{align*}
		I_1&=-\int_{\mr^d}\left(u\cdot \nabla \eta\right) \cdot \eta\dd x,~~I_2=-\int_{\mr^d}\left(\eta \cdot\nabla u^E\right) \cdot \eta\dd x,\\
		I_3&=-\int_{\mr^d}\nabla \tilde{p} \cdot \eta\dd x,~~I_4=\int_{\mr^d}\left(b\cdot \nabla \xi\right)\cdot \eta\dd x,\\
		I_5&=\int_{\mr^d}\left(\xi\cdot\nabla b^F\right)\cdot\eta\dd x,~~I_6=\int_{\mr^d}{\bf B_0}\cdot\nabla\xi \cdot\eta\dd x,\\
		I_7&=-\int_{\mr^d}\left(u\cdot\nabla \xi\right)\cdot\xi \dd x,~~I_8=-\int_{\mr^d}\left(\eta\cdot\nabla b^F\right)\cdot\xi\dd x,\\
		I_9&=\int_{\mr^d}\left(b\cdot\nabla \eta \right)\cdot\xi\dd x,~~I_{10}=\int_{\mr^d}\left(\xi\cdot\nabla u^E\right)\cdot \xi\dd x,\\
		I_{11}&=\int_{\mr^d} {\bf B_0}\cdot\nabla \eta\cdot\xi\dd x,	~~I_{12}=-\int_{\mr^d}E\cdot\eta \dd x\\
	I_{13}&=-\int_{\mr^d}\diver\,F_1\cdot\xi\dd x,~~I_{14}=-\int_{\mr^d}F_2\cdot\xi\dd x.
	\end{align*}
Through integration by parts and exploiting the divergence-free properties of $u,u^E$ and $b,b^F$, it follows that
	\begin{align*}
		I_1=I_3=I_7=0, ~I_4+I_9=0,~ I_6+I_{11}=0.
	\end{align*}
An application of Hölder's inequality and Young's inequality yields
	\begin{align*}
				I_2+I_{10}&\leq \|\nabla u^E(t)\|_{L^\infty}\left(\|\eta(t)\|_{L^2}^2+\|\xi(t)\|_{L^2}^2\right),\\[2mm]
		I_5+I_8&\leq 2\|\nabla b^F(t)\|_{L^\infty}\|\eta(t)\|_{L^2}\|\xi(t)\|_{L^2},\\[2mm]
				I_{12}&\leq \|\eta(t)\|_{L^2}\|E(t)\|_{L^2}\leq \dfrac{1}{2}\|\eta(t)\|_{L^2}^2+\dfrac{1}{2}\|E(t)\|_{L^2}^2,\\[2mm]
				I_{13}&\leq \|\nabla\xi(t)\|_{L^2}\|F_1(t)\|_{L^2}\leq \dfrac{1}{2}\|\nabla\xi(t)\|_{L^2}^2+\dfrac{1}{2}\|F_1(t)\|_{L^2}^2,\\[2mm]
					I_{14}&\leq \|\xi(t)\|_{L^2}\|F_2(t)\|_{L^2}\leq \dfrac{1}{2}\|\xi(t)\|_{L^2}^2+\dfrac{1}{2}\|F_2(t)\|_{L^2}^2.
	\end{align*}
The energy identity \eqref{energy_identity} can be reformulated as
\begin{align}\label{energy_reformulated}
	&\frac{\mathrm{d}}{\mathrm{d}t}\left(\|\eta(t)\|_{L^2}^2 + \|\xi(t)\|_{L^2}^2\right) \nonumber \\
	&\qquad\leq 4\left(\|\nabla u^E(t)\|_{L^\infty} + \|\nabla b^F(t)\|_{L^\infty}+1\right)
	\left(\|\eta(t)\|_{L^2}^2 + \|\xi(t)\|_{L^2}^2\right) \nonumber \\
	&\quad\qquad + \dfrac{1}{2}\|E(t)\|_{L^2}^2+\dfrac{1}{2}\|F_1(t)\|_{L^2}^2+\dfrac{1}{2}\|F_2(t)\|_{L^2}^2.
\end{align}
An application of Grönwall's inequality then yields
\begin{align*}
	&\left(\|\eta(t)\|_{L^2}^2+\|\xi(t)\|_{L^2}^2\right)\\
	&\leq \dfrac{1}{2}\exp\left(4\int_{0}^t\left(\|\nabla u^E(\tau)\|_{L^\infty} + \|\nabla b^F(\tau)\|_{L^\infty}+1\right)\dd\tau\right)\\
	&~~~~~~\int_{0}^{t}\|E(s)\|_{L^2}^2+\|F_1(s)\|_{L^2}^2+\|F_2(s)\|_{L^2}^2\dd s,
\end{align*}
which implies \eqref{energy_estimate} and the proof of the lemma is finished.
\end{proof}
\subsection{Local well-posedness}\label{sec:2.2}
In this subsection, we  obtain some a priori estimates, based on which  the local well-posedness can be rigorously demonstrated by using standard approximation methods (see \cite{majda-incompressible flow,kato-1988-CommutatorEstimatesEuler,sermange-1983}).
\begin{lemma}\label{local-well-pri}
	Let $m>\frac{d}{2}+1, T>0$ and $u_0,b_0\in H^m(\mr^d)$. If $(u,b)\in C\left([0,T],H^m(\mr^d)\right)$ is the unique solution to the Cauchy problem \eqref{mhd-B}, it holds that
\begin{align*}
	\|u(t),b(t)\|_{H^m}\leq \dfrac{\|u_0,b_0\|_{H^m}}{1-Ct\|u_0,b_0\|_{H^m}},
\end{align*}
where $0\leq t\leq T<C^{-1}\|u_0,b_0\|_{H^m}^{-1}$ and $C>0$ is some constant.
\end{lemma}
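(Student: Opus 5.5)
The plan is a standard $H^m$ energy estimate for \eqref{mhd-B} with $m>\frac d2+1$, in which the terms involving ${\bf B_0}$ cancel. Apply $\Lambda^m=(I-\Delta)^{m/2}$ (or $\partial^\alpha$ with $|\alpha|\le m$) to both equations. Then take the $L^2$ inner product of the velocity equation with $\Lambda^m u$ and of the magnetic equation with $\Lambda^m b$, and add the results.

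The pressure term vanishes because $\diver u=0$. The linear background terms cancel exactly:
\begin{align*}
\int {\bf B_0}\cdot\nabla\Lambda^m b\cdot\Lambda^m u\dd x+\int {\bf B_0}\cdot\nabla\Lambda^m u\cdot\Lambda^m b\dd x=\int {\bf B_0}\cdot\nabla\left(\Lambda^m u\cdot\Lambda^m b\right)\dd x=0,
\end{align*}
since ${\bf B_0}$ is constant. The diffusion contributes $\|\nabla\Lambda^m b\|_{L^2}^2\ge0$ on the left, so it can simply be dropped. This is why the constant $C$ is independent of ${\bf B_0}$.

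For the nonlinear terms, write each one as a commutator plus a principal part, for example
\begin{align*}
\Lambda^m(u\cdot\nabla u)=[\Lambda^m,u\cdot\nabla]u+u\cdot\nabla\Lambda^m u,
\end{align*}
and similarly for $u\cdot\nabla b$, $b\cdot\nabla b$ and $b\cdot\nabla u$. The principal parts $u\cdot\nabla\Lambda^m u\cdot\Lambda^m u$ and $u\cdot\nabla\Lambda^m b\cdot\Lambda^m b$ vanish by $\diver u=0$. The cross terms $b\cdot\nabla\Lambda^m b\cdot\Lambda^m u+b\cdot\nabla\Lambda^m u\cdot\Lambda^m b$ integrate to zero by $\diver b=0$.

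The commutators are handled by the Kato–Ponce estimate
\begin{align*}
\|[\Lambda^m,f\cdot\nabla]g\|_{L^2}\le C\left(\|\nabla f\|_{L^\infty}\|g\|_{H^m}+\|f\|_{H^m}\|\nabla g\|_{L^\infty}\right).
\end{align*}
Since $m>\frac d2+1$, Sobolev embedding gives $\|\nabla f\|_{L^\infty}\le C\|f\|_{H^m}$. Adding the $L^2$ level, which is the same computation without $\Lambda^m$, we obtain
\begin{align*}
\frac{\mathrm d}{\mathrm dt}\|u,b\|_{H^m}^2\le C\|u,b\|_{H^m}^3,\qquad\text{that is,}\qquad \frac{\mathrm d}{\mathrm dt}\|u,b\|_{H^m}\le C\|u,b\|_{H^m}^2 .
\end{align*}
Here the joint norm is the sum of the two norms, which is equivalent to the $\ell^2$-sum up to a constant absorbed into $C$.

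Integrating this Riccati-type inequality gives
\begin{align*}
\frac{1}{\|u_0,b_0\|_{H^m}}-\frac{1}{\|u(t),b(t)\|_{H^m}}\le Ct,
\end{align*}
which rearranges to the claimed bound as long as $Ct\|u_0,b_0\|_{H^m}<1$.

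The main technical point is justifying the differentiation of the norm and the integrations by parts at exactly $H^m$ regularity. I would handle this by mollifying (Friedrichs) or working with smooth approximate solutions, which is standard. The only genuinely new check is the exact cancellation of the ${\bf B_0}$ terms.
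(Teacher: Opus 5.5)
Your argument is the paper's argument. You apply $\Lambda^m$ and let the pressure, the transport terms, the paired $b\cdot\nabla$ terms and the paired $\mathbf{B_0}\cdot\nabla$ terms cancel in the summed energy. You then bound the remainder by $C(\|\nabla u\|_{L^\infty}+\|\nabla b\|_{L^\infty})(\|u\|_{H^m}^2+\|b\|_{H^m}^2)$ via a commutator estimate (you name Kato--Ponce explicitly; the paper uses it silently), add the $L^2$ level, and integrate a Riccati inequality.

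The one step that does not hold as written is ``equivalent to the $\ell^2$-sum up to a constant absorbed into $C$''. The derivative-losing terms cancel only in pairs, so a closed inequality exists only for $Y=\|u\|_{H^m}^2+\|b\|_{H^m}^2$. That inequality is $\frac{\mathrm d}{\mathrm dt}Y^{1/2}\le CY$, hence $Y^{1/2}(t)\le Y^{1/2}(0)/(1-CtY^{1/2}(0))$. This is exactly where the paper's own proof also stops. Converting to the sum norm $\|u\|_{H^m}+\|b\|_{H^m}$ costs a factor $\sqrt2$ in front of the fraction, and that factor cannot be moved into $C$.

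In fact, for $\mathbf{B_0}\neq\mathbf 0$ the sum-norm bound without this factor fails. The linear coupling transfers energy between $u$ and $b$ at the amplitude-independent rate $|k\cdot\mathbf{B_0}|$ (Alfv\'en waves). Take small data with $b_0=0$ and Fourier support near $\pm k_0$, where $|k_0|^2\ll|k_0\cdot\mathbf{B_0}|$. At $t=\pi/(4|k_0\cdot\mathbf{B_0}|)$ one has $\|u(t)\|_{H^m}+\|b(t)\|_{H^m}\approx\sqrt2\|u_0\|_{H^m}$. By contrast, $\|u_0\|_{H^m}/(1-Ct\|u_0\|_{H^m})\to\|u_0\|_{H^m}$ as the amplitude tends to zero.

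So state the conclusion for $Y^{1/2}$, or with the extra $\sqrt2$. That is all the later applications need, for example \eqref{u,b-low-t-local-bound}.
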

\begin{proof}
Applying the operator $\Lambda^m$ to both sides of \eqref{mhd-B}$_1$ and \eqref{mhd-B}$_2$, taking the inner product of the resulting equations with $\Lambda^m u$ and $\Lambda^m b$, respectively, and summing them, we obtain
	\begin{align}\nonumber
	&\frac{1}{2}	\frac{\dd}{\dd t}\left(\|\Lambda^m u\|_{L^2}^2+\|\Lambda^m b\|_{L^2}^2\right)+\|\Lambda^{m+1} b\|_{L^2}^2=\int_{\mr^d}\Lambda^m\nabla p\cdot\Lambda^m u\dd x\\\nonumber
	&~~~~~+\int_{\mr^d} \Lambda^m\left(b\cdot\nabla b-u\cdot\nabla u\right)\cdot \Lambda^m u\dd x+\int_{\mr^d} \Lambda^m\left(b\cdot\nabla u-u\cdot \nabla b\right)\cdot \Lambda^m b\dd x\\\label{2.1}
	&~~~~~+\int_{\mr^d} \Lambda^m\left({\bf B_0}\cdot\nabla b\right)\cdot \Lambda^m u\dd x+\int_{\mr^d} \Lambda^m\left({\bf B_0}\cdot\nabla u\right)\cdot \Lambda^m b\dd x.
\end{align}
Using integration by parts and the divergence free condition
$\text{div}u=\text{div}b=0$ leads to
\begin{align*}
	\int_{\mr^d}\Lambda^m\nabla p\cdot\Lambda^m u\dd x=\int_{\mr^d}\Lambda^m p\cdot\Lambda^m \diver\, u \dd x=0,
\end{align*}
and
\begin{align*}
	\int_{\mr^d}u\cdot\nabla (\Lambda^m u) \cdot\Lambda^m u\dd x=\int_{\mr^d}b\cdot\nabla (\Lambda^m b) \cdot\Lambda^m b\dd x=0,\\
		\int_{\mr^d} b\cdot\nabla \left(\Lambda^m b\right) \cdot \Lambda^m u \dd x+	\int_{\mr^d} b\cdot\nabla \left(\Lambda^m u\right) \cdot \Lambda^m b \dd x=0,\\
		\int_{\mr^d} \Lambda^m\left({\bf B_0}\cdot\nabla b\right)\cdot \Lambda^m u\dd x+\int_{\mr^d} \Lambda^m\left({\bf B_0}\cdot\nabla u\right)\cdot \Lambda^m b\dd x=0.
\end{align*}
The right-hand side of \eqref{2.1} can be estimated as
\begin{align}\nonumber
	&\frac{1}{2}	\frac{\dd}{\dd t}\left(\|\Lambda^m u\|_{L^2}^2+\|\Lambda^m b\|_{L^2}^2\right)+\|\Lambda^{m+1} b\|_{L^2}^2\\\label{2.2}
	&~~~~~~\leq C\left(\|\Lambda^m u\|_{L^2}^2+\|\Lambda^m b\|_{L^2}^2\right)\left(\|\nabla u\|_{L^\infty}+\|\nabla b\|_{L^\infty}\right).
\end{align}
Standard  $L^2$ energy estimate is
\begin{align}\label{2.3}
	\frac{1}{2}	\frac{\dd}{\dd t}\left(\| u\|_{L^2}^2+\| b\|_{L^2}^2\right)+\|\nabla b\|_{L^2}^2=0.
\end{align}
Combining \eqref{2.2} with \eqref{2.3} gives
\begin{align*}
	&\frac{1}{2}	\frac{\dd}{\dd t}\left(\|u\|_{H^m}^2+\| b\|_{H^m}^2\right)+\|\nabla b\|_{H^m}^2\\
		&~~\leq C\left(\|u\|_{H^m}^2+\| b\|_{H^m}^2\right)\left(\|\nabla u\|_{L^\infty}+\|\nabla b\|_{L^\infty}\right)\leq C\left(\|u\|_{H^m}^2+\| b\|_{H^m}^2\right)^\frac{3}{2}.
\end{align*}
Using the  Gronwall's inequality yields
\begin{align*}
	\|u(t)\|_{H^m}^2+\| b(t)\|_{H^m}^2\leq \dfrac{4\left(\|u(0)\|_{H^m}^2+\| b(0)\|_{H^m}^2\right)}{\left(2-Ct\sqrt{\|u(0)\|_{H^m}^2+\| b(0)\|_{H^m}^2}\right)^2}.
\end{align*}
The proof of the lemma is complete.
\end{proof}
\section{The Case ${\bf B_0}={\bf 0}$}\label{sec:3}
~~~~When the constant background magnetic field is  zero (${\bf B_0} = \mathbf{0}$),  the equations \eqref{mhd-B} are equivalent to  \eqref{mhd}.  In this section, we prove our main result Theorem \ref{thm} in the  case ${\bf B_0} = \mathbf{0}$.

\subsection{Approximate Solutions Scheme}\label{sec:3.1}
We begin by constructing two sequences of approximate solutions $b^{w,\lambda}$ and $u^{w,\lambda}$ with $w=\pm 1$, where $b^{w,\lambda}$ contains  low frequencies only and $u^{w,\lambda}$ contain both high and low frequencies. More precisely, we set
\begin{align}\label{u-low+high}
	u^{w,\lambda}(x,t)=u^{l,w,\lambda}(x,t)+u^{h,w,\lambda}(x,t).
\end{align}
The high-frequency components $u^{h,w,\lambda}$  are constructed as
\begin{align}\label{u-high}
	u^{h,w,\lambda}(x,t)=\begin{cases}
		\nabla^\perp \phi^{h,w,\lambda}(x,t)=(\partial_{x_2}\phi^{h,w,\lambda}(x,t),-\partial_{x_1}\phi^{h,w,\lambda}(x,t)), &~d=2,\\[3mm]
		\left(\nabla^\perp \phi^{h,w,\lambda}(x,t),0 \right)=(\partial_{x_2}\phi^{h,w,\lambda}(x,t),-\partial_{x_1}\phi^{h,w,\lambda}(x,t),0), &~d=3.
	\end{cases}
\end{align}
In \eqref{u-high}, the function $\phi^{h,w,\lambda}(x,t)$ is defined as
\begin{align}\label{phi-high}
	\phi^{h,w,\lambda}(x,t)=\begin{cases}
		\lambda^{-s-\delta-1}\phi(\dfrac{x_1}{\lambda^\delta})\phi(\dfrac{x_2}{\lambda^\delta})\sin (\lambda x_2-wt),&~d=2,\\[3mm]
		\lambda^{-s-\delta-1}\phi(\dfrac{x_1}{\lambda^\delta})\phi(\dfrac{x_2}{\lambda^\delta})\sin (\lambda x_2-wt)\phi(x_3),&~d=3,
	\end{cases}
\end{align}
where $s > 0, \lambda>0, w=\pm 1$,  the parameter $\delta > 0$ is to be specified later and  the smooth function $\phi \in C_c^\infty(\mathbb{R})$ satisfies $\text{supp}\,\phi \subset [-2,2]$ and $\phi(x) \equiv 1$ on $|x| < 1$.

To construct the low-frequency components, we choose smooth function $\Phi_1,\Phi_2\in C_c^\infty(\mr)$ such that $\Phi_1'=\Phi_2=1$ on the support of $\phi$. Then the low-frequency components $u^{l,w,\lambda}$  and $b^{w,\lambda}$ are constructed by solving
\begin{align}\label{mhd-low}
	\begin{cases}
		\partial_t u^{l,w,\lambda}+u^{l,w,\lambda} \cdot \nabla u^{l,w,\lambda}+\nabla p^{l,w,\lambda}=b^{w,\lambda}\cdot \nabla b^{w,\lambda},\\[1mm]
		\partial_t b^{w,\lambda}-\Delta b^{w,\lambda}+u^{l,w,\lambda}\cdot \nabla b^{w,\lambda}=b^{w,\lambda}\cdot\nabla u^{l,w,\lambda},  \\[1mm]
		\diver\, u^{l,w,\lambda}=\diver\, b^{w,\lambda}=0,
	\end{cases}
\end{align}
with the initial data
\begin{align*}
	u^{l,w,\lambda}(x, 0)=b^{w,\lambda}(x,0)=\begin{cases}
	\nabla^\perp\phi^{l,w,\lambda}(x),&~d=2,\\[2mm]
	\left(	\nabla^\perp\phi^{l,w,\lambda}(x),0\right),&~d=3,
\end{cases}
\end{align*}
where the stream function $\phi^{l,w,\lambda}(x)$ is given by
\begin{align}\label{phi-low-Phi-cutoff}
	\phi^{l,w,\lambda}(x)=\begin{cases}
		-w\lambda^{-1+\delta}\Phi_1(\dfrac{x_1}{\lambda^\delta})\Phi_2(\dfrac{x_2}{\lambda^\delta}),&~d=2,\\[3mm]
		-w\lambda^{-1+\delta}\Phi_1(\dfrac{x_1}{\lambda^\delta})\Phi_2(\dfrac{x_2}{\lambda^\delta})\phi(x_3),&~d=3.
	\end{cases}
\end{align}

Direct computation shows that the initial data
\begin{align}
u^{l,w,\lambda}(x,0) = b^{w,\lambda}(x,0) = \begin{cases}
	\begin{pmatrix}
		-w\lambda^{-1}\Phi_1\left(\dfrac{x_1}{\lambda^\delta}\right)\Phi_2'\left(\dfrac{x_2}{\lambda^\delta}\right) \\[4mm]
		w\lambda^{-1}\Phi_1'\left(\dfrac{x_1}{\lambda^\delta}\right)\Phi_2\left(\dfrac{x_2}{\lambda^\delta}\right)
	\end{pmatrix},&~d=2,\\[12mm]
	\begin{pmatrix}
		-w\lambda^{-1}\Phi_1\left(\dfrac{x_1}{\lambda^\delta}\right)\Phi_2'\left(\dfrac{x_2}{\lambda^\delta}\right)\phi(x_3) \\[4mm]
		w\lambda^{-1}\Phi_1'\left(\dfrac{x_1}{\lambda^\delta}\right)\Phi_2\left(\dfrac{x_2}{\lambda^\delta}\right)\phi(x_3)\\[4mm]
		0
	\end{pmatrix},&~d=3,
\end{cases}
\end{align}
Applying Lemma \ref{local-well-pri}  and Lemma \ref{phi-cmp-2010}, we derive, for all $m > \frac{d}{2} + 1$, the uniform bounds
\begin{align}\label{u,b-low-t-local-bound}
	\|u^{l,w,\lambda}(t)\|_{H^m} + \|b^{w,\lambda}(t)\|_{H^{m}}
	&\leq C\left(\|u^{l,w,\lambda}(0)\|_{H^m} + \|b^{w,\lambda}(0)\|_{H^m}\right) \nonumber \\
	&= C\|\nabla^\perp\phi^{l,w,\lambda}\|_{H^m} \leq C\lambda^{-1+\delta}
\end{align}
 hold for $0 \leq t \leq T$, where $T$ denotes the existence time as stated in Lemma~\ref{local-well-pri}. Without loss of generality, we may assume $T < 1$ when we take  $\min\{T,1\}$.

It follows from \eqref{u-high} that
\begin{align}\label{u{hw,lambda}-two-pmatrix}
	u^{h,w,\lambda}=\begin{pmatrix}
		\lambda^{-s-\delta}\phi(\dfrac{x_1}{\lambda^\delta})\phi(\dfrac{x_2}{\lambda^\delta})\cos (\lambda x_2-wt)+\lambda^{-s-1-2\delta}\phi(\dfrac{x_1}{\lambda^\delta})\phi'(\dfrac{x_2}{\lambda^\delta})\sin (\lambda x_2-wt)\\[4mm]
		- \lambda^{-s-1-2\delta}\phi'(\dfrac{x_1}{\lambda^\delta})\phi(\dfrac{x_2}{\lambda^\delta})\sin (\lambda x_2-wt)
	\end{pmatrix}
\end{align}
 for $d=2$, and
\begin{align}\label{u{hw,lambda}-two-pmatrix1}
	u^{h,w,\lambda}=\begin{pmatrix}
	\left(	\lambda^{-s-\delta}\phi(\dfrac{x_1}{\lambda^\delta})\phi(\dfrac{x_2}{\lambda^\delta})\cos (\lambda x_2-wt)+\lambda^{-s-1-2\delta}\phi(\dfrac{x_1}{\lambda^\delta})\phi'(\dfrac{x_2}{\lambda^\delta})\sin (\lambda x_2-wt)\right)\phi(x_3)\\[4mm]
		- \lambda^{-s-1-2\delta}\phi'(\dfrac{x_1}{\lambda^\delta})\phi(\dfrac{x_2}{\lambda^\delta})\sin (\lambda x_2-wt)\phi(x_3)\\[4mm]
		0
	\end{pmatrix}
\end{align}
for $d=3$.

Thanks to Lemma \ref{phi-cmp-2010}, for any $r\geq 0$, one has
\begin{align}\nonumber
&\left\|	\lambda^{-s-\delta}\phi(\dfrac{x_1}{\lambda^\delta})\phi(\dfrac{x_2}{\lambda^\delta})\cos (\lambda x_2-wt)\right\|_{H^r(\mr^2)}\\\nonumber
&\leq \lambda^{-s-\delta}\left\|	\phi(\dfrac{x_1}{\lambda^\delta})\right\|_{H^r(\mr)}\left\|	\phi(\dfrac{x_2}{\lambda^\delta})\cos (\lambda x_2-wt)\right\|_{H^r(\mr)}\\\label{3.1}
&\leq C\lambda^{-s-\delta}\lambda^{r+\delta}=C\lambda^{r-s},
\end{align}
and \begin{align*}
	&\left\|	\lambda^{-s-\delta}\phi(\dfrac{x_1}{\lambda^\delta})\phi(\dfrac{x_2}{\lambda^\delta})\cos (\lambda x_2-wt)\phi(x_3)\right\|_{H^r(\mr^3)}\\
	&\leq \left\|	\lambda^{-s-\delta}\phi(\dfrac{x_1}{\lambda^\delta})\phi(\dfrac{x_2}{\lambda^\delta})\cos (\lambda x_2-wt)\right\|_{H^r(\mr^2)}\|\phi(x_3)\|_{H^r(\mr)}\\
	&\leq C\lambda^{r-s},
\end{align*}
It follows that
\begin{align}\label{u-high-bound}
	\|u^{h,w,\lambda}(t)\|_{H^r}\leq C\lambda^{r-s},\quad \|u^{h,w,\lambda}(t)\|_{L^\infty}\leq C\lambda^{-s-\delta},\quad \|\nabla u^{h,w,\lambda}(t)\|_{L^\infty}\leq C\lambda^{-s+1-\delta}.
\end{align}
Using \eqref{u,b-low-t-local-bound} and \eqref{u-high-bound}, we have
\begin{align}\nonumber
	&\|\nabla u^{w,\lambda}(t)\|_{L^\infty}+\|\nabla b^{w,\lambda}(t)\|_{L^\infty}\\\nonumber
	&\leq 	\|\nabla u^{l,w,\lambda}(t)\|_{L^\infty}+\|\nabla u^{h,w,\lambda}(t)\|_{L^\infty}+\|\nabla b^{w,\lambda}(t)\|_{L^\infty}\\\nonumber
		&\leq \|\nabla u^{h,w,\lambda}(t)\|_{L^\infty}+	C\left(\| u^{l,w,\lambda}(t)\|_{H^m}+\| b^{w,\lambda}(t)\|_{H^m}\right)\\\label{nabla-u-b-t-bound}
	&\leq C\left(\lambda^{-1+\delta}+\lambda^{-s+1-\delta}\right),
\end{align}
where $m>\frac{d}{2}+1$ and $0\leq t\leq T$.

\subsection{Error Estimates for the Approximate Solutions}\label{sec:3.2}
 In view of \eqref{u-low+high} and \eqref{mhd-low},  the approximate solutions $(u^{w,\lambda},b^{w,\lambda})$ solve
\begin{align}\label{mhd-low+high}
	\begin{cases}
		\partial_t u^{w,\lambda}+u^{w,\lambda} \cdot \nabla u^{w,\lambda}+\nabla p^{w,\lambda}=b^{w,\lambda}\cdot \nabla b^{w,\lambda}+E^{w,\lambda},\\[1mm]
		\partial_t b^{w,\lambda}-\Delta b^{w,\lambda}+u^{w,\lambda}\cdot \nabla b^{w,\lambda}=b^{w,\lambda}\cdot\nabla u^{w,\lambda}+F^{w,\lambda},  \\[1mm]
		\diver\, u^{w,\lambda}=\diver\, b^{w,\lambda}=0,
	\end{cases}
\end{align}
where
\begin{align}\label{E{w,lambda}}
	E^{w,\lambda}&=\partial_t u^{h,w,\lambda}+u^{l,w,\lambda}\cdot\nabla u^{h,w,\lambda}+u^{h,w,\lambda}\cdot\nabla u^{h,w,\lambda}+u^{h,w,\lambda}\cdot\nabla u^{l,w,\lambda},\\\nonumber
	F^{w,\lambda}&=u^{h,w,\lambda}\cdot\nabla b^{w,\lambda}-b^{w,\lambda}\cdot\nabla u^{h,w,\lambda}\\\label{F{w,lambda}}
	&=\text{div}
	\left(u^{h,w,\lambda}\otimes b^{w,\lambda}-b^{w,\lambda}\otimes u^{h,w,\lambda}\right)\define\text{div} \tilde{F}^{w,\lambda}.
\end{align}

Moreover, the initial data are
\begin{align*}
			u^{w,\lambda}(x, 0)&=\begin{cases}
				\nabla^\perp\left( \phi^{l,w,\lambda}(x)+\phi^{h,w,\lambda}(x,0)\right),&~d=2,\\[2mm]
				\left(	\nabla^\perp\left( \phi^{l,w,\lambda}(x)+\phi^{h,w,\lambda}(x,0)\right),0\right),&~d=3,
			\end{cases}\\
			b^{w,\lambda}(x, 0)&=\begin{cases}
				\nabla^\perp\phi^{l,w,\lambda}(x),&~d=2,\\[2mm]
				\left(	\nabla^\perp\phi^{l,w,\lambda}(x),0\right),&~d=3.
			\end{cases}
\end{align*}

 To handle the error terms $E^{w,\lambda}$  and $F^{w,\lambda}$ appearing in \eqref{mhd-low+high}, which are defined as in \eqref{E{w,lambda}} and \eqref{F{w,lambda}} respectively, we prove
\begin{lemma}\label{E,F-error-estimate}
	 For any $s>0, \delta>0, \lambda\gg 1$ and $0\leq t\leq T$, it holds that
	\begin{align}\label{E-F-error-lambda-decay}
		\|E^{w,\lambda}(t)\|_{L^2}\leq C\lambda^{-\sigma_{s,\delta}},\quad	\|\tilde{F}^{w,\lambda}(t)\|_{L^2}\leq C\lambda^{-s-1},
	\end{align}
	where \begin{align}\label{sigma-s-delta}
		\sigma_{s,\delta}\define\min\{s+1-\delta,2s-1+\delta\}.
	\end{align}
\end{lemma}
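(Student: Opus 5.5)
The plan is to bound each term of $E^{w,\lambda}$ in \eqref{E{w,lambda}} and the tensor $\tilde F^{w,\lambda}$ in \eqref{F{w,lambda}} separately in $L^2$. The only inputs are the explicit formulas \eqref{u{hw,lambda}-two-pmatrix}--\eqref{u{hw,lambda}-two-pmatrix1}, the bounds \eqref{u-high-bound}, the low-frequency bound \eqref{u,b-low-t-local-bound} (with Sobolev embedding $H^m\subset W^{1,\infty}$), and Lemma \ref{phi-cmp-2010}.

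\textbf{The term $\tilde F^{w,\lambda}$.} This one is immediate. Hölder's inequality gives
\begin{align*}
\|\tilde F^{w,\lambda}\|_{L^2}\le 2\|u^{h,w,\lambda}\|_{L^2}\|b^{w,\lambda}\|_{L^\infty}\le C\lambda^{-s}\lambda^{-1+\delta}.
\end{align*}
This already suffices if we only need $\lambda^{-s-1+\delta}$. To reach the stated $\lambda^{-s-1}$, I would instead use the sharper bound $\|u^{h,w,\lambda}\|_{L^\infty}\le C\lambda^{-s-\delta}$ against the $L^2$ norm of $b^{w,\lambda}$. The $L^2$ energy equality gives $\|b^{w,\lambda}(t)\|_{L^2}\le \|u^{l}(0),b(0)\|_{L^2}\le C\lambda^{-1+\delta}$, since the initial data are $\lambda^{-1}$ times functions of $x/\lambda^\delta$, whose $L^2$ norm is $\lambda^{\delta}$ in 2D. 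The product is then $C\lambda^{-s-\delta}\lambda^{-1+\delta}=C\lambda^{-s-1}$, which is the claimed bound.

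\textbf{The time-derivative term.} For $E^{w,\lambda}$ I would first isolate $\partial_t u^{h,w,\lambda}+u^{l,w,\lambda}\cdot\nabla u^{h,w,\lambda}$, which is where the choice of low-frequency data matters. Differentiating the leading component gives $\partial_t u^{h}_1 = w\lambda^{-s-\delta}\phi\phi\sin(\lambda x_2-wt)+O_{L^2}(\lambda^{-s-1-\delta})$. On the support of $\phi(x_1/\lambda^\delta)\phi(x_2/\lambda^\delta)$ we have $\Phi_1'=\Phi_2=1$ and $\Phi_2'=0$, so $u^{l,w,\lambda}(0)=(0,w\lambda^{-1})$ there. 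Hence
\begin{align*}
u^{l}(0)\cdot\nabla u^h_1 = w\lambda^{-1}\partial_{x_2}u^h_1=-w\lambda^{-s-\delta}\phi\phi\sin(\lambda x_2-wt)+\text{lower order},
\end{align*}
and this cancels the leading part of $\partial_t u^h$. The remaining contributions are:
\begin{itemize}
\item lower-order pieces carrying $\phi'$, of size $\lambda^{-s-1}$ in $L^2$;
\item the difference $(u^{l}(t)-u^{l}(0))\cdot\nabla u^h$.
\end{itemize}
For the difference, I would use $\|u^l(t)-u^l(0)\|_{L^\infty}\le t\sup\|\partial_t u^l\|_{L^\infty}$. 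The time derivative $\partial_t u^l$ is quadratic in the low-frequency fields plus the pressure, so it is of size $\lambda^{-2+2\delta}$ up to derivative factors $\lambda^{-\delta}$. Multiplying by $\|\nabla u^h\|_{L^2}\le C\lambda^{1-s}$ gives terms like $\lambda^{-s-1+\delta}$ (after using $\nabla u^l\sim\lambda^{-1}$). Here the diffusion $\Delta b$ only enters through $b$, and $b\cdot\nabla b\sim \lambda^{-2+\delta}$.

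\textbf{The remaining terms and the main obstacle.} Next, $u^h\cdot\nabla u^l$ is bounded by $\|u^h\|_{L^2}\|\nabla u^l\|_{L^\infty}\le C\lambda^{-s}\lambda^{-1}$. The self-interaction $u^h\cdot\nabla u^h$ is the source of the $2s-1+\delta$ exponent. Its leading part is $u^h_1\partial_{x_1}u^h+u^h_2\partial_{x_2}u^h$:
\begin{itemize}
\item $\partial_{x_1}$ hits only the envelope, giving $\lambda^{-s-\delta}\cdot\lambda^{-s-\delta}\cdot\lambda^{\delta}$ in $L^2$ (the $L^2$ norm of the envelope being $\lambda^{\delta}$);
\item $u^h_2\sim\lambda^{-s-1-2\delta}$ times $\partial_{x_2}u^h\sim\lambda^{1-s-\delta}$ in $L^\infty$, with $L^2$ factor $\lambda^\delta$.
\end{itemize}
Both are $\lesssim\lambda^{-2s-\delta}$, even better than needed. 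The cruder bound $\|u^h\|_{L^\infty}\|\nabla u^h\|_{L^2}\le\lambda^{-s-\delta}\lambda^{1-s}=\lambda^{1-2s-\delta}$ gives exactly $\sigma_{s,\delta}$'s second entry $2s-1+\delta$, so I will simply use that.

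The main obstacle is the time-dependent low-frequency term. I must check that the drift of $u^{l,w,\lambda}(t)$ away from the constant vector $(0,w\lambda^{-1})$ on the support, over times $t\le T<1$, is small enough, namely $\lesssim\lambda^{-2+\delta}$ in $L^\infty$. Multiplied by $\lambda^{1-s}$, this yields $\lambda^{-s-1+\delta}$, matching the first entry of $\sigma_{s,\delta}$. I would handle this by estimating $\partial_t u^l$ from \eqref{mhd-low} in $L^\infty$ via $H^m$ bounds with the $\lambda^\delta$ scaling tracked explicitly through Lemma \ref{phi-cmp-2010}.
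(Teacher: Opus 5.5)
Your decomposition matches the paper's: the same bound $\|u^{h,w,\lambda}\|_{L^\infty}\|b^{w,\lambda}\|_{L^2}\le C\lambda^{-s-1}$ for $\tilde F^{w,\lambda}$, the same cancellation of the leading part of $\partial_t u^{h,w,\lambda}$ against $u^{l,w,\lambda}(0)\cdot\nabla u^{h,w,\lambda}$ using $\Phi_1'=\Phi_2=1$ on $\text{supp}\,\phi$, and the same crude Hölder bounds for $u^h\cdot\nabla u^h$ and $u^h\cdot\nabla u^l$. There is, however, a real gap in the drift term $(u^{l,w,\lambda}(t)-u^{l,w,\lambda}(0))\cdot\nabla u^{h,w,\lambda}$. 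This is the term that produces the exponent $s+1-\delta$, and you flag it as the main obstacle without resolving it.

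You pair $\|u^{l,w,\lambda}(t)-u^{l,w,\lambda}(0)\|_{L^\infty}$ with $\|\nabla u^{h,w,\lambda}\|_{L^2}\le C\lambda^{1-s}$, so you need the drift to be $\lesssim\lambda^{-2+\delta}$ in $L^\infty$. The only information on the low-frequency solution for $t>0$ is \eqref{u,b-low-t-local-bound}, an inhomogeneous $H^m$ bound of size $\lambda^{-1+\delta}$. Through Sobolev embedding it yields only $\|\partial_t u^{l,w,\lambda}\|_{L^\infty}\le C\big(\|u^{l,w,\lambda}\|_{H^m}+\|b^{w,\lambda}\|_{H^m}\big)^2\le C\lambda^{-2+2\delta}$, hence only $\lambda^{-s-1+2\delta}$ for this term. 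That is strictly worse than $\lambda^{-\sigma_{s,\delta}}$ whenever $s+3\delta>2$.

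The loss is not cosmetic. With it, the later requirement $s<\sigma$ would force $1-s<\delta<\frac12$, which is impossible for $s\le\frac12$. Lemma \ref{phi-cmp-2010} cannot supply the missing $\lambda^{-\delta}$, because it only describes the data at $t=0$. Tracking the $\lambda^\delta$ scaling for $t>0$ would need a separate propagation estimate for homogeneous norms $\|\nabla^k(u^{l,w,\lambda},b^{w,\lambda})(t)\|_{L^2}$ plus interpolation, and care with the Leray projector, which is unbounded on $L^\infty$. None of this is provided. Your heuristic ``$\lambda^{-2+2\delta}$ up to derivative factors $\lambda^{-\delta}$'' mixes the crude bound with a scale-tracked one and is justified by neither.

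The fix is to reverse the Hölder pairing, as the paper does. Put $L^2$ on the drift:
\[
\|u^{l,w,\lambda}(t)-u^{l,w,\lambda}(0)\|_{L^2}\le t\sup_{0\le\tau\le t}\big\|\mathbb{P}\big(b^{w,\lambda}\cdot\nabla b^{w,\lambda}-u^{l,w,\lambda}\cdot\nabla u^{l,w,\lambda}\big)(\tau)\big\|_{L^2}\le C\lambda^{-2+2\delta},
\]
using only \eqref{u,b-low-t-local-bound} and the $L^2$-boundedness of $\mathbb{P}$. Multiplying by $\|\nabla u^{h,w,\lambda}\|_{L^\infty}\le C\lambda^{1-s-\delta}$ from \eqref{u-high-bound} gives exactly $C\lambda^{-s-1+\delta}$.

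This works because the crude $H^m$ bound cannot distinguish $L^\infty$ from $L^2$ for the low-frequency fields. For the explicit $u^{h,w,\lambda}$, by contrast, the $L^\infty$ norm is smaller than the $L^2$ norm by the factor $\lambda^{\delta}$, coming from its support of area $\sim\lambda^{2\delta}$. So the $L^\infty$ norm must sit on $u^{h,w,\lambda}$.

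Similarly, your $\|\nabla u^{l,w,\lambda}\|_{L^\infty}\lesssim\lambda^{-1}$ in the $u^h\cdot\nabla u^l$ term is not available from the paper's bounds. The crude $\lambda^{-1+\delta}$ still gives $\lambda^{-s-1+\delta}$ there, so that step survives.
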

\begin{proof}
	The estimate of the term $\tilde{F}^{w,\lambda}(t)$ is direct. It follows from   \eqref{u,b-low-t-local-bound} and \eqref{u-high-bound} that
	\begin{align*}
		\|\tilde{F}^{w,\lambda}(t)\|_{L^2}&\leq \|u^{h,w,\lambda}\|_{L^\infty}\|b^{w,\lambda}\|_{L^2}\leq C\lambda^{-s-\delta}\lambda^{-1+\delta}=C\lambda^{-s-1}.
	\end{align*}

To estimate the term $E^{w,\lambda}$, we first consider the case  $d=2$. Use \eqref{u{hw,lambda}-two-pmatrix} to obtain
\begin{align*}
	\partial_t u^{h,w,\lambda}=\begin{pmatrix}
		w	\lambda^{-s-\delta}\phi(\dfrac{x_1}{\lambda^\delta})\phi(\dfrac{x_2}{\lambda^\delta})\sin (\lambda x_2-wt)-w\lambda^{-s-1-2\delta}\phi(\dfrac{x_1}{\lambda^\delta})\phi'(\dfrac{x_2}{\lambda^\delta})\cos (\lambda x_2-wt)\\[4mm]
		w\lambda^{-s-1-2\delta}\phi'(\dfrac{x_1}{\lambda^\delta})\phi(\dfrac{x_2}{\lambda^\delta})\cos (\lambda x_2-wt)
	\end{pmatrix}.
\end{align*}
Since the first component in $\left(\partial_t u^{h,w,\lambda}\right)_1$ carries a larger exponent of $\lambda$,  we employ the cut-off functions $\Phi_1$ and $\Phi_2$ as in \eqref{phi-low-Phi-cutoff} to yield
\begin{align*}
	&w	\lambda^{-s-\delta}\phi(\dfrac{x_1}{\lambda^\delta})\phi(\dfrac{x_2}{\lambda^\delta})\sin (\lambda x_2-wt)\\
	&~~~~=w	\lambda^{-1}\Phi_1'(\dfrac{x_1}{\lambda^\delta})\Phi_2(\dfrac{x_2}{\lambda^\delta})\lambda^{-s+1-\delta}\phi(\dfrac{x_1}{\lambda^\delta})\phi(\dfrac{x_2}{\lambda^\delta})\sin (\lambda x_2-wt)\\
	&~~~~=u_2^{l,w,\lambda}(x,0)\lambda^{-s+1-\delta}\phi(\dfrac{x_1}{\lambda^\delta})\phi(\dfrac{x_2}{\lambda^\delta})\sin (\lambda x_2-wt).
\end{align*}
Consequently, we have
\begin{align*}
	&\left(\partial_t u^{h,w,\lambda}+u^{l,w,\lambda}\cdot\nabla u^{h,w,\lambda}\right)_1(x,t)\\
	&=w	\lambda^{-s-\delta}\phi(\dfrac{x_1}{\lambda^\delta})\phi(\dfrac{x_2}{\lambda^\delta})\sin (\lambda x_2-wt)-w\lambda^{-s-1-2\delta}\phi(\dfrac{x_1}{\lambda^\delta})\phi'(\dfrac{x_2}{\lambda^\delta})\cos (\lambda x_2-wt)\\
	&~~+u^{l,w,\lambda}\cdot \nabla \left(	\lambda^{-s-\delta}\phi(\dfrac{x_1}{\lambda^\delta})\phi(\dfrac{x_2}{\lambda^\delta})\cos (\lambda x_2-wt)+\lambda^{-s-1-2\delta}\phi(\dfrac{x_1}{\lambda^\delta})\phi'(\dfrac{x_2}{\lambda^\delta})\sin (\lambda x_2-wt)\right)\\
	&=\lambda^{-s+1-\delta}\phi(\dfrac{x_1}{\lambda^\delta})\phi(\dfrac{x_2}{\lambda^\delta})\sin (\lambda x_2-wt)\left(u_2^{l,w,\lambda}(x,0)-u_2^{l,w,\lambda}(x,t)\right)\\
	&~~-w\lambda^{-s-1-2\delta}\phi(\dfrac{x_1}{\lambda^\delta})\phi'(\dfrac{x_2}{\lambda^\delta})\cos (\lambda x_2-wt)\\
	&~~+ 2\lambda^{-s-2\delta}u_{2}^{l,w,\lambda}(x,t)\phi(\dfrac{x_1}{\lambda^\delta})\phi'(\dfrac{x_2}{\lambda^\delta})\cos (\lambda x_2-wt)\\
	&~~+\lambda^{-s-1-3\delta}u_{2}^{l,w,\lambda}(x,t)\phi(\dfrac{x_1}{\lambda^\delta})\phi''(\dfrac{x_2}{\lambda^\delta})\sin (\lambda x_2-wt)\\
	&~~+\lambda^{-s-2\delta}u_{1}^{l,w,\lambda}(x,t)\phi'(\dfrac{x_1}{\lambda^\delta})\phi(\dfrac{x_2}{\lambda^\delta})\cos (\lambda x_2-wt)\\
	&~~+\lambda^{-s-1-3\delta}u_{1}^{l,w,\lambda}(x,t)\phi'(\dfrac{x_1}{\lambda^\delta})\phi'(\dfrac{x_2}{\lambda^\delta})\cos (\lambda x_2-wt)\\
	&\define I_1+I_2+I_3+I_4+I_5+I_6.
\end{align*}

Using \eqref{u,b-low-t-local-bound} leads to
\begin{align*}
	&\left\|u_2^{l,w,\lambda}(x,0)-u_2^{l,w,\lambda}(x,t)\right\|_{L^2}\\
	&\leq t\left\| \partial_t u_2^{l,w,\lambda}(x,\tau)\right\|_{L^2}\\
	&\leq T\left\|\mathbb{P}\left(b^{w,\lambda}(\tau)\cdot\nabla b_2^{w,\lambda}(\tau)-u^{l,w,\lambda}(\tau)\cdot\nabla u^{l,w,\lambda}_2(\tau)\right)\right\|_{L^2}\\
	&\leq C\|\nabla (b^{w,\lambda}_2(\tau),u_2^{l,w,\lambda}(\tau))\|_{L^2}\|b^{w,\lambda}_2(\tau),u_2^{l,w,\lambda}(\tau)\|_{L^\infty}\leq C\lambda^{-2+2\delta}.
\end{align*}
The term $I_1$ is then estimated as
\begin{align*}
	\|I_1\|_{L^2}&\leq \lambda^{-s+1-\delta}\left\|\phi(\dfrac{x_1}{\lambda^\delta})\phi(\dfrac{x_2}{\lambda^\delta})\sin (\lambda x_2-wt)\right\|_{L^\infty}\left\|u_2^{l,w,\lambda}(x,0)-u_2^{l,w,\lambda}(x,t)\right\|_{L^2}\\
	&\leq C\lambda^{-s+1-\delta}\lambda^{-2+2\delta}=C\lambda^{-s-1+\delta}
\end{align*}
Applying Lemma \ref{phi-cmp-2010} yields
\begin{align*}
	\|I_2\|_{L^2}&\leq \lambda^{-s-1-2\delta}\left\|\phi(\dfrac{x_1}{\lambda^\delta})\right\|_{L^2(\mr)}\left\|\phi'(\dfrac{x_2}{\lambda^\delta})\cos (\lambda x_2-wt)\right\|_{L^2(\mr)}\leq C\lambda^{-s-1-\delta}.
\end{align*}
Applying \eqref{u,b-low-t-local-bound} yields
\begin{align*}
	\|I_3\|_{L^2}&\leq 2\lambda^{-s-2\delta}\|u^{l,w,\lambda}_2(t)\|_{L^2(\mr^2)}\left\|\phi(\dfrac{x_1}{\lambda^\delta})\right\|_{L^\infty(\mr)}\left\|\phi'(\dfrac{x_2}{\lambda^\delta})\cos (\lambda x_2-wt)\right\|_{L^\infty(\mr)}\\
	&\leq C\lambda^{-s-2\delta}\lambda^{-1+\delta}\leq C\lambda^{-s-1-\delta}.
\end{align*}
Similarly, we can obtain
\begin{align*}
	\|I_4\|_{L^2}&\leq C \lambda^{-s-1-3\delta}\left\|u_{2}^{l,w,\lambda}(t)\right\|_{L^2}\left\|\phi(\dfrac{x_1}{\lambda^\delta})\right\|_{L^\infty}\left\|\phi''(\dfrac{x_2}{\lambda^\delta})\sin (\lambda x_2-wt)\right\|_{L^\infty}\leq C\lambda^{-s-2-2\delta},\\
	\|I_5\|_{L^2}&\leq C\lambda^{-s-2\delta}\left\|u_{1}^{l,w,\lambda}(t)\right\|_{L^2}\left\|\phi'(\dfrac{x_1}{\lambda^\delta})\right\|_{L^\infty}\left\|\phi(\dfrac{x_2}{\lambda^\delta})\cos (\lambda x_2-wt)\right\|_{L^\infty}\leq C\lambda^{-s-1-\delta},\\
		\|I_6\|_{L^2}&\leq C\lambda^{-s-1-3\delta}\left\|u_{1}^{l,w,\lambda}(t)\right\|_{L^2}\left\|\phi'(\dfrac{x_1}{\lambda^\delta})\right\|_{L^\infty}\left\|\phi'(\dfrac{x_2}{\lambda^\delta})\cos (\lambda x_2-wt)\right\|_{L^\infty}\leq C\lambda^{-s-2-2\delta}.
\end{align*}
We are now in position to establish
\begin{align*}
	\left\|\left(\partial_t u^{h,w,\lambda}+u^{l,w,\lambda}\cdot\nabla u^{h,w,\lambda}\right)_1(x,t)\right\|_{L^2}\leq C\lambda^{-s-1+\delta}.
\end{align*}

For the second component of $\partial_t u^{h,w,\lambda}+u^{l,w,\lambda}\cdot\nabla u^{h,w,\lambda}$, we rewrite it as
\begin{align*}
	&\left(\partial_t u^{h,w,\lambda}+u^{l,w,\lambda}\cdot\nabla u^{h,w,\lambda}\right)_2(x,t)\\
	&=w	\lambda^{-s-1-2\delta}\phi'(\dfrac{x_1}{\lambda^\delta})\phi(\dfrac{x_2}{\lambda^\delta})\cos (\lambda x_2-wt)\\
	&-\lambda^{-s-1-3\delta}u_{1}^{l,w,\lambda}(x,t)\phi^{\prime\prime}\left(\frac{x_{1}}{\lambda^{\delta}}\right)\phi\left(\frac{x_{2}}{\lambda^{\delta}}\right)\sin(\lambda x_{2}-\omega t) \\
	&-\lambda^{-s-2\delta}u_{2}^{l,w,\lambda}(x,t)\phi^{\prime}\left(\frac{x_1}{\lambda^\delta}\right)\phi\left(\frac{x_2}{\lambda^\delta}\right)\cos(\lambda x_2-\omega t) \\
	&-\lambda^{-s-1-3\delta}u_{2}^{l,w,\lambda}(x,t)\phi^{\prime}\left(\frac{x_1}{\lambda^\delta}\right)\phi^{\prime}\left(\frac{x_2}{\lambda^\delta}\right)\sin(\lambda x_2-\omega t).
\end{align*}
Similar to the estimates on the first component $\partial_t u^{h,w,\lambda}+u^{l,w,\lambda}\cdot\nabla u^{h,w,\lambda}$, it holds that
\begin{align*}
	&\left\|\left(\partial_t u^{h,w,\lambda}+u^{l,w,\lambda}\cdot\nabla u^{h,w,\lambda}\right)_2(t)\right\|_{L^2(\mr^2)}\\
	&\leq C\lambda^{-s-1-\delta}+C\lambda^{-1+\delta}\left(\lambda^{-s-1-3\delta}+\lambda^{-s-2\delta}+\lambda^{-s-1-3\delta}\right)\\
	&\leq C\lambda^{-s-1-\delta}.
\end{align*}
Consequently, it follows that\begin{align}\nonumber
	&\left\|\left(\partial_t u^{h,w,\lambda}+u^{l,w,\lambda}\cdot\nabla u^{h,w,\lambda}\right)(t)\right\|_{L^2(\mr^2)}\\\nonumber
	&~~\leq \left\|\left(\partial_t u^{h,w,\lambda}+u^{l,w,\lambda}\cdot\nabla u^{h,w,\lambda}\right)_1(t)\right\|_{L^2(\mr^2)}\\\nonumber
	&~~~~~~+\left\|\left(\partial_t u^{h,w,\lambda}+u^{l,w,\lambda}\cdot\nabla u^{h,w,\lambda}\right)_2(t)\right\|_{L^2(\mr^2)}\\\label{E{w,lambda,1}}
	&~~\leq C\lambda^{-s-1+\delta}.
\end{align}

The remaining terms $u^{h,w,\lambda}\cdot\nabla u^{h,w,\lambda}$ and $u^{h,w,\lambda}\cdot\nabla u^{l,w,\lambda}$ in $E^{w,\lambda}$ can be estimated in a similar way,  which can be stated as
\begin{align}\label{E{w,lambda,2}}
	\left\|\left(u^{h,w,\lambda}\cdot\nabla u^{h,w,\lambda}\right)(t)\right\|_{L^2}\leq 	\left\|u^{h,w,\lambda}\right\|_{L^\infty}	\left\|\nabla u^{h,w,\lambda}\right\|_{L^2}\leq C\lambda^{-s-\delta}\lambda^{1-s}\leq C\lambda^{-2s+1-\delta},
\end{align}
and
\begin{align}\label{E{w,lambda,3}}
	\left\|\left(u^{h,w,\lambda}\cdot\nabla u^{l,w,\lambda}\right)(t)\right\|_{L^2}\leq \left\|u^{h,w,\lambda}\right\|_{L^2}\left\|\nabla u^{l,w,\lambda}\right\|_{L^2}\leq C\lambda^{-s-\delta}\lambda^{-1+\delta}\leq C\lambda^{-s-1}.
\end{align}
Thanks to \eqref{E{w,lambda,1}}, \eqref{E{w,lambda,2}} and \eqref{E{w,lambda,3}}, we derive
\begin{align*}
	\|E^{w,\lambda}(t)\|_{L^2(\mr^2)}\leq C\lambda^{-\min\{s+1-\delta, ~2s-1+\delta\}}.
\end{align*}
 The case $d=3$ can be handled similarly and we omit the details here. The proof of the lemma is finished.
\end{proof}
\subsection{Exact Solutions}\label{sec:3.3}
    When $d=2$, we choose $(u_{w,\lambda},b_{w,\lambda})$ to be the unique solution to the following resistive MHD equations
\begin{align}\label{mhd-solution-low+high}
	\begin{cases}
		\partial_t u_{w,\lambda}+u_{w,\lambda} \cdot \nabla u_{w,\lambda}+\nabla p_{w,\lambda}=b_{w,\lambda}\cdot \nabla b_{w,\lambda},\\[1mm]
		\partial_t b_{w,\lambda}-\Delta b_{w,\lambda}+u_{w,\lambda}\cdot \nabla b_{w,\lambda}=b_{w,\lambda}\cdot\nabla u_{w,\lambda},  \\[1mm]
		\diver\, u_{w,\lambda}=\diver\, b_{w,\lambda}=0, \\[1mm]
		u_{w,\lambda}(x, 0)=u^{w,\lambda}(x,0)=\nabla^\perp\left( \phi^{l,w,\lambda}(x)+\phi^{h,w,\lambda}(x,0)\right),\\[1mm]
		b_{w,\lambda}(x, 0)=b^{w,\lambda}(x,0)=\nabla^\perp\phi^{l,w,\lambda}(x),
	\end{cases}
\end{align}
It follows from \eqref{mhd-low+high} that
\begin{align}\label{mhd-low+high-appro}
	\begin{cases}
		\partial_t u^{w,\lambda}+u^{w,\lambda} \cdot \nabla u^{w,\lambda}+\nabla p^{w,\lambda}=b^{w,\lambda}\cdot \nabla b^{w,\lambda}+E^{w,\lambda},\\[1mm]
		\partial_t b^{w,\lambda}-\Delta b^{w,\lambda}+u^{w,\lambda}\cdot \nabla b^{w,\lambda}=b^{w,\lambda}\cdot\nabla u^{w,\lambda}+\text{div}\,\tilde{F}^{w,\lambda},  \\[1mm]
		\diver\, u^{w,\lambda}=\diver\, b^{w,\lambda}=0, \\[1mm]
		u^{w,\lambda}(x, 0)=\nabla^\perp\left( \phi^{l,w,\lambda}(x)+\phi^{h,w,\lambda}(x,0)\right),\,\, b^{w,\lambda}(x, 0)=\nabla^\perp\phi^{l,w,\lambda}(x),
	\end{cases}
\end{align}

Substracting \eqref{mhd-solution-low+high} from \eqref{mhd-low+high-appro} and using Lemma \ref{two-systems-E-F}, we derive
\begin{align*}
	&\left(\left\| \left(u^{w,\lambda}-u_{w,\lambda}\right)(t)\right\|_{L^2}+\left\| \left(b^{w,\lambda}-b_{w,\lambda}\right)(t)\right\|_{L^2}\right)\\
		&\leq t\left(\max_{0\leq \tau \leq t}\left(\|E^{w,\lambda}(\tau)\|_{L^2}^2+\|\tilde{F}^{w,\lambda}(\tau)\|_{L^2}^2\right)\right)^{\frac{1}{2}}	\exp\left(2t\max_{0\leq \tau \leq t}\left(\|\nabla u^{w,\lambda}(\tau)\|_{L^\infty} + \|\nabla b^{w,\lambda}(\tau)\|_{L^\infty}+1\right)\right).
\end{align*}
Combining \eqref{nabla-u-b-t-bound} with Lemma \ref{E,F-error-estimate}, we obtain
\begin{align}\label{u^w-w_w-b^w-b_w-lambda-decay}
		\left(\left\| \left(u^{w,\lambda}-u_{w,\lambda}\right)(t)\right\|_{L^2}+\left\| \left(b^{w,\lambda}-b_{w,\lambda}\right)(t)\right\|_{L^2}\right)\leq C\lambda^{-\sigma_{s,\delta}}.
\end{align}

For the case $d=3$, the  arguments are similar and we can obtain  an estimate analogous to \eqref{u^w-w_w-b^w-b_w-lambda-decay}.
\subsection{Proof of Theorem \ref{thm}: The Case
 ${\bf B_0} = \mathbf{0}$}\label{sec:3.4}

Now we give the proof of Theorem \ref{thm} in the case ${\bf B_0} = \mathbf{0}$. We will focus on the two-dimensional case $(d=2)$ and the three-dimensional case $(d=3)$ can be dealt with in a similar way.

\begin{proof}[Proof of Theorem \ref{thm}]
	 Using \eqref{phi-high} and \eqref{phi-low-Phi-cutoff} leads to
	\begin{align*}
		\|u_{w,\lambda}(0)\|_{H^s} + \|b_{w,\lambda}(0)\|_{H^s}
		&= \|u^{w,\lambda}(0)\|_{H^s} + \|b^{w,\lambda}(0)\|_{H^s} \\
		&\leq 2\|\nabla^{\perp}\phi^{l,w,\lambda}\|_{H^s} + \|\nabla^\perp \phi^{h,w,\lambda}(0)\|_{H^s} \\
		&\leq C\lambda^{-1+\delta} + C \leq C_1,
	\end{align*}
	where $C_1>0$ is a positive constant independent of $\lambda$ and $w=\pm 1$.
	
	To get \eqref{nonuniform-initial-bound} in Theorem \ref{thm}, we make a minor modification of the functions $\phi^{l,w,\lambda}$ and $\phi^{h,w,\lambda}$, which are
	\begin{align*}
		\phi^{h,w,\lambda}(x,t)=\begin{cases}
			\varepsilon\lambda^{-s-\delta-1}\phi(\dfrac{x_1}{\lambda^\delta})\phi(\dfrac{x_2}{\lambda^\delta})\sin (\lambda x_2-wt),&~d=2,\\[3mm]
			\varepsilon\lambda^{-s-\delta-1}\phi(\dfrac{x_1}{\lambda^\delta})\phi(\dfrac{x_2}{\lambda^\delta})\sin (\lambda x_2-wt)\phi(x_3),&~d=3,
		\end{cases}
	\end{align*}
and
\begin{align*}
	\phi^{l,w,\lambda}(x)=\begin{cases}
		-\varepsilon w\lambda^{-1+\delta}\Phi_1(\dfrac{x_1}{\lambda^\delta})\Phi_2(\dfrac{x_2}{\lambda^\delta}),&~d=2,\\[3mm]
		-\varepsilon w\lambda^{-1+\delta}\Phi_1(\dfrac{x_1}{\lambda^\delta})\Phi_2(\dfrac{x_2}{\lambda^\delta})\phi(x_3),&~d=3,
	\end{cases}
\end{align*}
where  $\varepsilon \leq \min\{1,\frac{\gamma}{C_1}\}$. Moreover, one has
\begin{align*}
	\|u_{w,\lambda}(0)\|_{H^s} + \|b_{w,\lambda}(0)\|_{H^s} \leq C_1\varepsilon\leq \gamma.
\end{align*}
 \eqref{nonuniform-initial-bound} is then proved.
	
	To prove \eqref{nonuniform-initial-diff}, according to \eqref{u-b-solution-initial},  the difference between the initial values can be expressed as
	\begin{align*}
		u_{1,\lambda}(0)-u_{-1,\lambda}(0)
		&=\nabla^\perp \left(\phi^{l,1,\lambda}(x)-\phi^{l,-1,\lambda}(x)\right)+\nabla^\perp \left(\phi^{h,1,\lambda}(x,0)-\phi^{h,-1,\lambda}(x,0)\right),\\
			b_{1,\lambda}(0)-b_{-1,\lambda}(0)&=\nabla^\perp \left(\phi^{l,1,\lambda}(x)-\phi^{l,-1,\lambda}(x)\right).
	\end{align*}
Thus, it yields
\begin{align*}
	&\left\|u_{1,\lambda}(0)-u_{-1,\lambda}(0)\right\|_{H^s}+	\left\|b_{1,\lambda}(0)-b_{-1,\lambda}(0)\right\|_{H^s}\\
	&\leq C\varepsilon\lambda^{-1}\left(\left\|\Phi'_1(\dfrac{\cdot}{\lambda^\delta})\right\|_{H^s(\mr)}\left\|\Phi_2(\dfrac{\cdot}{\lambda^\delta})\right\|_{H^s(\mr)}+\left\|\Phi_1(\dfrac{\cdot}{\lambda^\delta})\right\|_{H^s(\mr)}\left\|\Phi'_2(\dfrac{\cdot}{\lambda^\delta})\right\|_{H^s(\mr)}\right)\\
	&\leq C\varepsilon\lambda^{-1+\delta} ~\to~0, ~\text{for}~ \lambda\to \infty,
\end{align*}
which implies  \eqref{nonuniform-initial-diff}.

Now prove \eqref{nonuniform-low-bound-sint}. Let $\left(u_{1,\lambda}(t),b_{1,\lambda}(t)\right)$ and $\left(u_{-1,\lambda}(t),b_{-1,\lambda}(t)\right)$ be two sequences of solutions to the equations \eqref{mhd} corresponding to the following initial values \begin{align}\label{u-b-solution-initial}
\begin{split}
		\left(u_{1,\lambda}(0),b_{1,\lambda}(0)\right)=	\left(u^{1,\lambda}(0),b^{1,\lambda}(0)\right)&=\left(\nabla^\perp\left( \phi^{l,1,\lambda}(x)+\phi^{h,1,\lambda}(x,0)\right),\nabla^\perp\phi^{l,1,\lambda}(x)\right),\\
		\left(u_{-1,\lambda}(0),b_{-1,\lambda}(0)\right)=	\left(u^{-1,\lambda}(0),b^{-1,\lambda}(0)\right)&=\left(\nabla^\perp\left( \phi^{l,-1,\lambda}(x)+\phi^{h,-1,\lambda}(x,0)\right),\nabla^\perp\phi^{l,-1,\lambda}(x)\right),
\end{split}
\end{align}
 respectively. By Lemma \ref{local-well-pri},  we obtain
\begin{align*}
	\|u_{w,\lambda}(t),b_{w,\lambda}(t)\|_{H^m}&\leq C \|u_{w,\lambda}(0),b_{w,\lambda}(0)\|_{H^m}\\
	&=C \|u^{w,\lambda}(0),b^{w,\lambda}(0)\|_{H^m}\leq C\lambda^{-s+m},
\end{align*}
where  $w=\pm 1$ and $m>\frac{d}{2}+1$. Then it follows from \eqref{u,b-low-t-local-bound} and \eqref{u-high-bound} that
\begin{align}\label{5.1}
\begin{split}
				\|u_{1,\lambda}(t)-u^{1,\lambda}(t)\|_{H^m}+	\|b_{1,\lambda}(t)-b^{1,\lambda}(t)\|_{H^m}\leq  C\lambda^{-s+m},\\
			\|u_{-1,\lambda}(t)-u^{-1,\lambda}(t)\|_{H^m}+	\|b_{-1,\lambda}(t)-b^{-1,\lambda}(t)\|_{H^m}\leq  C\lambda^{-s+m}.
\end{split}
\end{align}

In \eqref{u^w-w_w-b^w-b_w-lambda-decay}, to guarantee  $\sigma_{s,\delta}>0$, the parameters must satisfy
\begin{equation}
	s > 1 - \delta \quad \text{and} \quad 0<\delta < 1,
\end{equation}
where $\delta$ represents the regularity loss exponent and $s$ is the Sobolev regularity index.

Using \eqref{u^w-w_w-b^w-b_w-lambda-decay} and \eqref{5.1}, with help of the interpolation, we obtain, for $w=\pm 1$,
\begin{align}\nonumber
	&\|u_{w,\lambda}(t)-u^{w,\lambda}(t)\|_{H^s}+\|b_{w,\lambda}(t)-b^{w,\lambda}(t)\|_{H^s}\\\nonumber
	&\leq C\left\|u_{w,\lambda}(t)-u^{w,\lambda}(t),b_{w,\lambda}(t)-b^{w,\lambda}(t)\right\|^{\frac{s}{s+\frac{d}{2}+1}}_{H^{s+\frac{d}{2}+1}}\left\|u_{w,\lambda}(t)-u^{w,\lambda}(t),b_{w,\lambda}(t)-b^{w,\lambda}(t)\right\|_{L^2}^{\frac{\frac{d}{2}+1}{s+\frac{d}{2}+1}}\\\nonumber
	&\leq C\lambda^{\left(\frac{d}{2}+1\right)\frac{s}{s+\frac{d}{2}+1}}\lambda^{-\sigma_{s,\delta}\frac{\frac{d}{2}+1}{s+\frac{d}{2}+1}}\\\label{u,b-appro-exact}
	&=C \lambda^{\left(s-\sigma_{s,\delta}\right)\frac{\frac{d}{2}+1}{s+\frac{d}{2}+1}},
\end{align}
where $\lambda$ is sufficiently large.

Under the following conditions
\begin{equation}\label{eq:parameter_condition}
	\frac{\frac{d}{2} + 1}{s + \frac{d}{2} + 1} >0 \quad \text{and} \quad (s - \sigma_{s,\delta})=\min\{-1+\delta,-s+1-\delta\} < 0,
\end{equation}
we have
\begin{equation}\label{eq:convergence_result}
	\lim_{\lambda \to \infty} \left( \|u_{w,\lambda}(t) - u^{w,\lambda}(t)\|_{H^s} + \|b_{w,\lambda}(t) - b^{w,\lambda}(t)\|_{H^s} \right) = 0.
\end{equation}

Note that
\begin{align}\label{u-high-diff}
	u^{h,1,\lambda}(t)-u^{h,-1,\lambda}(t)=\begin{pmatrix}
\begin{pmatrix}
				\lambda^{-s-\delta}\phi(\dfrac{x_1}{\lambda^\delta})\phi(\dfrac{x_2}{\lambda^\delta})\left(\cos (\lambda x_2-t)-\cos(\lambda x_2+t)\right)\\
			+\lambda^{-s-1-2\delta}\phi(\dfrac{x_1}{\lambda^\delta})\phi'(\dfrac{x_2}{\lambda^\delta})\left(\sin (\lambda x_2-t)-\sin (\lambda x_2+t)\right)
\end{pmatrix}\\[8mm]
			\left(- \lambda^{-s-1-2\delta}\phi'(\dfrac{x_1}{\lambda^\delta})\phi(\dfrac{x_2}{\lambda^\delta})\left(\sin (\lambda x_2-t)-\sin (\lambda x_2+t)\right)\right)
		\end{pmatrix}
\end{align}
and the first component $\left(u^{h,1,\lambda}(t)-u^{h,-1,\lambda}(t)\right)_1$ contains a relatively larger exponent $\lambda^{-s-\delta}$. It follows that
\begin{align*}
	&\left\|\lambda^{-s-\delta}\phi(\dfrac{x_1}{\lambda^\delta})\phi(\dfrac{x_2}{\lambda^\delta})\left(\cos (\lambda x_2-t)-\cos(\lambda x_2+t)\right)\right\|_{H^s}\\
	&=\left\|\lambda^{-s-\delta}\sin(\lambda x_2)\sin t\phi(\dfrac{x_1}{\lambda^\delta})\phi(\dfrac{x_2}{\lambda^\delta})\right\|_{H^s}\\
	&\geq |\sin t|\lambda^{-s-\delta}\left\|\phi(\dfrac{x_1}{\lambda^\delta})\right\|_{H^s(\mr)}\left\|\phi(\dfrac{x_2}{\lambda^\delta})\sin (\lambda x_2)\right\|_{H^s(\mr)}\\
	&\geq \tilde{c}|\sin t|.
\end{align*}
Applying \eqref{u-high-diff} yields
\begin{align}\nonumber
	&\left\|u^{h,1,\lambda}(t)-u^{h,-1,\lambda}(t)\right\|_{H^s}\\\nonumber
	&\geq \tilde{c}|\sin t|-\lambda^{-s-1-2\delta}\left\|\phi(\dfrac{x_1}{\lambda^\delta})\phi'(\dfrac{x_2}{\lambda^\delta})\left(\sin (\lambda x_2-t)-\sin (\lambda x_2+t)\right)\right\|_{H^s}\\\nonumber
	&~~~~~~-\lambda^{-s-1-2\delta}\left\|\phi'(\dfrac{x_1}{\lambda^\delta})\phi(\dfrac{x_2}{\lambda^\delta})\left(\sin (\lambda x_2-t)-\sin (\lambda x_2+t)\right)\right\|_{H^s}\\\label{u-high-diff-Hs}
	&\geq \tilde{c}|\sin t|-C\lambda^{-1-\delta}.
\end{align}

For any $0<t\leq T\leq 1$, using \eqref{u,b-appro-exact} leads to
\begin{align}\nonumber
	&\left\|u_{1,\lambda}(t)-u_{-1,\lambda}(t)\right\|_{H^s}+	\left\|b_{1,\lambda}(t)-b_{-1,\lambda}(t)\right\|_{H^s}\\\nonumber
	&\geq \left\| u^{1,\lambda}(t)-u^{-1,\lambda}(t),b^{1,\lambda}(t)-b^{-1,\lambda}(t)\right\|_{H^s}\\\nonumber
	&~~~~~~-\left\| u^{1,\lambda}(t)-u_{1,\lambda}(t),b^{1,\lambda}(t)-b_{1,\lambda}(t)\right\|_{H^s}\\\nonumber
		&~~~~~~-\left\| u^{-1,\lambda}(t)-u_{-1,\lambda}(t),b^{-1,\lambda}(t)-b_{-1,\lambda}(t)\right\|_{H^s}\\\label{5.2}
		&\geq \left\| u^{1,\lambda}(t)-u^{-1,\lambda}(t),b^{1,\lambda}(t)-b^{-1,\lambda}(t)\right\|_{H^s}-C\varepsilon\lambda^{\left(s-\sigma_{s,\delta}\right)\frac{\frac{d}{2}+1}{s+\frac{d}{2}+1}}.
\end{align}
Due to \eqref{u,b-low-t-local-bound} and \eqref{u-high-diff-Hs}, it holds that
\begin{align*}
	&\left\| u^{1,\lambda}(t)-u^{-1,\lambda}(t),b^{1,\lambda}(t)-b^{-1,\lambda}(t)\right\|_{H^s}\\
	&\geq \left\|u^{h,1,\lambda}(t)-u^{h,-1,\lambda}(t)\right\|_{H^s}- \left\|u^{l,1,\lambda}(t)\right\|_{H^s}-\left\|u^{l,-1,\lambda}(t)\right\|_{H^s}\\
	&~~~~~~~~~~-\left\|b^{1,\lambda}(t)\right\|_{H^s}-\left\|b^{-1,\lambda}(t)\right\|_{H^s}\\
	&\geq \tilde{c}\varepsilon|\sin t|-C\varepsilon\lambda^{-1-\delta}-C\varepsilon\lambda^{-1+\delta}\geq \tilde{c}\varepsilon|\sin t|-C\lambda^{-1+\delta}.
\end{align*}
Substituting  into \eqref{5.2} gives
\begin{align*}
	&\left\|u_{1,\lambda}(t)-u_{-1,\lambda}(t)\right\|_{H^s}+	\left\|b_{1,\lambda}(t)-b_{-1,\lambda}(t)\right\|_{H^s}\\
	&\geq \tilde{c}\varepsilon|\sin t|-C\varepsilon\lambda^{-1+\delta}-C\varepsilon\lambda^{\left(s-\sigma_{s,\delta}\right)\frac{\frac{d}{2}+1}{s+\frac{d}{2}+1}} ~\to~\tilde{c}\varepsilon|\sin t|, ~\text{for}~ \lambda\to \infty.
\end{align*}
Now, choosing $C_2$ such that $\frac{\gamma}{C_2}<\min\{1,\frac{\gamma}{C_1}\}$ and $\frac{\gamma}{C_2}<\varepsilon\leq \min\{1,\frac{\gamma}{C_1}\}$, we obtain $\tilde{c}\varepsilon|\sin t| > \frac{\tilde{c}}{C_2}\gamma |\sin t|\define c\gamma |\sin t|$, which implies \eqref{nonuniform-low-bound-sint}.

Up to now Theorem \ref{thm} is proved in the case $d=2$. The case $d=3$ can be proved in a similar way and hence we finish the proof of Theorem \ref{thm}.
\end{proof}

\section{The Case ${\bf B_0} \neq \mathbf{0}$}\label{sec:4}
~~~~In this section, we give a sketch of proof Theorem \ref{thm} in the case ${\bf B_0}=(B_1, B_2)\neq {\bf 0}$. We will focus on the two-dimensional case and three-dimensional case can be treated in a similar way.

\subsection{The Approximate Solution}\label{sec:4.1}
In two-dimensional case, we assume that ${\bf B_0}=(B_1, B_2)\neq {\bf 0}$. Based on \eqref{phi-high} and \eqref{phi-low-Phi-cutoff}, using a coordinate transformation
\begin{align*}
	x_2\mapsto B_1x_2-B_2x_1,
\end{align*}
we can choose the stream function as follows
\begin{align}\label{4.1}
		\bar{\phi}^{h,w,\lambda}(x,t)=
		\lambda^{-s-\delta-1}\phi\left(\dfrac{x_1}{\lambda^\delta}\right)\phi\left(\dfrac{B_1x_2-B_2x_1}{\lambda^\delta}\right)\sin \left(\lambda \left(B_1x_2-B_2x_1\right)-wt\right),
\end{align}
and
\begin{align}\label{4.2}
		\bar{\phi}^{l,w,\lambda}(x)=
		-w\lambda^{-1+\delta}\Phi_1\left(\dfrac{x_1}{\lambda^\delta}\right)\Phi_2\left(\dfrac{B_1x_2-B_2x_1}{\lambda^\delta}\right).
\end{align}

The  high-frequency term $\bar{u}^{w,\lambda}$ is then defined as
\begin{align}\nonumber
	&\bar{u}^{h,w,\lambda}(x,t)=\nabla^\perp \bar{\phi}^{h,w,\lambda}(x,t)\\\label{4.4}
	&=\begin{pmatrix}
\begin{pmatrix}
			B_1\lambda^{-s-\delta}\phi\left(\dfrac{x_1}{\lambda^\delta}\right)\phi\left(\dfrac{B_1x_2-B_2x_1}{\lambda^\delta}\right)\cos \left(\lambda \left(B_1x_2-B_2x_1\right)-wt\right)\\[3mm]
		+B_1\lambda^{-s-1-2\delta}\phi\left(\dfrac{x_1}{\lambda^\delta}\right)\phi'\left(\dfrac{B_1x_2-B_2x_1}{\lambda^\delta}\right)\sin \left(\lambda \left(B_1x_2-B_2x_1\right)-wt\right)
\end{pmatrix}\\[10mm]
\begin{pmatrix}
			- \lambda^{-s-1-2\delta}\phi'\left(\dfrac{x_1}{\lambda^\delta}\right)\phi\left(\dfrac{B_1x_2-B_2x_1}{\lambda^\delta}\right)\sin \left(\lambda \left(B_1x_2-B_2x_1\right)-wt\right)\\[3mm]
	+B_2\lambda^{-s-\delta}\phi\left(\dfrac{x_1}{\lambda^\delta}\right)\phi\left(\dfrac{B_1x_2-B_2x_1}{\lambda^\delta}\right)\cos \left(\lambda \left(B_1x_2-B_2x_1\right)-wt\right)\\[3mm]
	+B_2\lambda^{-s-1-2\delta}\phi\left(\dfrac{x_1}{\lambda^\delta}\right)\phi'\left(\dfrac{B_1x_2-B_2x_1}{\lambda^\delta}\right)\sin \left(\lambda \left(B_1x_2-B_2x_1\right)-wt\right)
\end{pmatrix}
	\end{pmatrix}
\end{align}
Similar to \eqref{3.1}, for any $r\geq 0$, one has
\begin{align}\nonumber
	&\left\| \lambda^{-s-\delta}\phi\left(\dfrac{x_1}{\lambda^\delta}\right)\phi\left(\dfrac{B_1x_2-B_2x_1}{\lambda^\delta}\right)\cos \left(\lambda \left(B_1x_2-B_2x_1\right)-wt\right)\right\|_{H^r(\mr^2)}\\\label{4.3}
	&\leq C\lambda^{-s-\delta}\left\| \phi(\frac{\cdot}{\lambda^{\delta}})\right\|_{H^r(\mr)}\left\| \phi(\frac{\cdot}{\lambda^{\delta}})\cos (\lambda\cdot-wt)\right\|_{H^r(\mr)}\leq C\lambda^{r-s}.
\end{align}

  The low-frequency component $(\bar{u}^{l,w,\lambda}, \bar{b}^{w,\lambda})$ is chosen to be  the solution to system \eqref{mhd-B} with initial data
\begin{align*}
	\bar{u}^{l,w,\lambda}(x,0) = \bar{b}^{w,\lambda}(x,0) = \nabla^\perp \bar{\phi}^{l,w,\lambda}(x).
\end{align*}

Following similar estimates established in \eqref{u,b-low-t-local-bound} and \eqref{u-high-bound}, we can obtain
\begin{align}\label{4.5}
		\|\bar{u}^{l,w,\lambda}(t)\|_{H^m} + \|\bar{b}^{w,\lambda}(t)\|_{H^{m}}
		\leq C\lambda^{-1+\delta},
\end{align}
and
\begin{align}\label{4.6}
	\|\bar{u}^{h,w,\lambda}(t)\|_{H^r}\leq C\lambda^{r-s},\quad \|\bar{u}^{h,w,\lambda}(t)\|_{L^\infty}\leq C\lambda^{-s-\delta},\quad \|\nabla \bar{u}^{h,w,\lambda}(t)\|_{L^\infty}\leq C\lambda^{-s+1-\delta}
\end{align}
for any $0 \leq t \leq T\leq 1$, where $[0,T]$ denotes the existence interval of the solution guaranteed by Lemma~\ref{local-well-pri}.
\subsection{Error Estimates for the Approximate Solutions}\label{sec:4.2}
Following the approach in subsection \ref{sec:3.2} and based on system \eqref{mhd-B}, we construct a perturbed system  as follows
\begin{align}\label{4.9}
		\begin{cases}
		\partial_t \bar{u}^{w,\lambda}+\bar{u}^{w,\lambda} \cdot \nabla \bar{u}^{w,\lambda}+\nabla p^{w,\lambda}=\bar{b}^{w,\lambda}\cdot \nabla \bar{b}^{w,\lambda}+{\bf B_0}\cdot\nabla \bar{b}^{w,\lambda}+E_1^{w,\lambda},\\[1mm]
		\partial_t \bar{b}^{w,\lambda}-\Delta \bar{b}^{w,\lambda}+\bar{u}^{w,\lambda}\cdot \nabla \bar{b}^{w,\lambda}=\bar{b}^{w,\lambda}\cdot\nabla \bar{u}^{w,\lambda}+{\bf B_0}\cdot\nabla \bar{u}^{w,\lambda}+F_1^{w,\lambda},  \\[1mm]
		\diver\, \bar{u}^{w,\lambda}=\diver\, \bar{b}^{w,\lambda}=0,
	\end{cases}
\end{align}
where
\begin{align*}
		E_1^{w,\lambda}&=\partial_t \bar{u}^{h,w,\lambda}+\bar{u}^{l,w,\lambda}\cdot\nabla \bar{u}^{h,w,\lambda}+\bar{u}^{h,w,\lambda}\cdot\nabla \bar{u}^{h,w,\lambda}+\bar{u}^{h,w,\lambda}\cdot\nabla \bar{u}^{l,w,\lambda},\\\nonumber
	F_1^{w,\lambda}&=\bar{u}^{h,w,\lambda}\cdot\nabla \bar{b}^{w,\lambda}-\bar{b}^{w,\lambda}\cdot\nabla \bar{u}^{h,w,\lambda}-{\bf B_0}\cdot \nabla \bar{u}^{h,w,\lambda}\\
	&\define\diver \tilde{F}_1^{w,\lambda}-{\bf B_0}\cdot \nabla \bar{u}^{h,w,\lambda}.
\end{align*}
The perturbed system \eqref{4.9} is  analogous to \eqref{mhd-low+high}. The directional derivative of the high-frequency component along the background magnetic field ${\bf B_0} = (B_1, B_2)$ is
\begin{align*}
	{\bf B_0} \cdot \nabla \bar{u}^{h,w,\lambda} = B_1\partial_{x_1}\bar{u}^{h,w,\lambda} + B_2\partial_{x_2}\bar{u}^{h,w,\lambda}.
\end{align*}
Remarkably, we observe that the oscillatory part in $\bar{u}^{h,w,\lambda}$ satisfies
\begin{align*}
	{\bf B_0} \cdot \nabla \left[\phi\left(\frac{B_1x_2 - B_2x_1}{\lambda^\delta}\right) \cos\left(\lambda(B_1x_2 - B_2x_1) - wt\right)\right] = 0,
\end{align*}
which leads to the following simplified expression
\begin{align*}
	&	{\bf B_0}\cdot \nabla	\left(\lambda^{-s-\delta}\phi\left(\dfrac{x_1}{\lambda^\delta}\right)
	\phi\left(\dfrac{B_1x_2-B_2x_1}{\lambda^\delta}\right)\cos \left(\lambda \left(B_1x_2-B_2x_1\right)-wt\right)\right)\\
	&=B_1\lambda^{-s-2\delta}\phi'\left(\frac{x_1}{\lambda^{\delta}}\right)	\phi\left(\dfrac{B_1x_2-B_2x_1}{\lambda^\delta}\right)\cos \left(\lambda \left(B_1x_2-B_2x_1\right)-wt\right)
\end{align*}
Therefore, following analogous arguments as  \eqref{4.3}, we can obtain
\begin{align}\label{4.7}
	\left\| {\bf B_0}\cdot\nabla \bar{u}^{h,w,\lambda}\right\|_{L^2}\leq C\lambda^{-s-\delta}.
\end{align}
Combining the estimates \eqref{4.5}, \eqref{4.6} with Lemma \ref{E,F-error-estimate}, we deduce that
\begin{align}\label{4.8}
	\|E_1^{w,\lambda}(t)\|_{L^2}\leq C\lambda^{-\sigma_{s,\delta}},\quad	\|\tilde{F}_1^{w,\lambda}(t)\|_{L^2}\leq C\lambda^{-s-1},
\end{align}
where \begin{align*}
	\sigma_{s,\delta}=\min\{s+1-\delta,2s-1+\delta\}.
\end{align*}
\subsection{Exact Solutions}\label{sec:4.3}
Let $(\bar{u}_{w,\lambda},\bar{b}_{w,\lambda})$ be the unique solution to system \eqref{mhd-B} with the initial data $\left(\bar{u}^{w,\lambda}(x,0),\bar{b}^{w,\lambda}(x,0)\right)$, satisfying
\begin{align*}
	\begin{cases}
		\partial_t \bar{u}_{w,\lambda}+\bar{u}_{w,\lambda} \cdot \nabla \bar{u}_{w,\lambda}+\nabla p_{w,\lambda}=\bar{b}_{w,\lambda}\cdot \nabla \bar{b}_{w,\lambda}+{\bf B_0}\cdot \nabla \bar{b}_{w,\lambda},\\[1mm]
		\partial_t \bar{b}_{w,\lambda}-\Delta \bar{b}_{w,\lambda}+\bar{u}_{w,\lambda}\cdot \nabla \bar{b}_{w,\lambda}=\bar{b}_{w,\lambda}\cdot\nabla \bar{u}_{w,\lambda}+{\bf B_0}\cdot \nabla \bar{u}_{w,\lambda},  \\[1mm]
		\diver\, \bar{u}_{w,\lambda}=\diver\, \bar{b}_{w,\lambda}=0,
	\end{cases}
\end{align*}
It follows from \eqref{4.9} that
\begin{align*}
	\begin{cases}
	\partial_t \bar{u}^{w,\lambda}+\bar{u}^{w,\lambda} \cdot \nabla \bar{u}^{w,\lambda}+\nabla p^{w,\lambda}=\bar{b}^{w,\lambda}\cdot \nabla \bar{b}^{w,\lambda}+{\bf B_0}\cdot\nabla \bar{b}^{w,\lambda}+E_1^{w,\lambda},\\[1mm]
	\partial_t \bar{b}^{w,\lambda}-\Delta \bar{b}^{w,\lambda}+\bar{u}^{w,\lambda}\cdot \nabla \bar{b}^{w,\lambda}=\bar{b}^{w,\lambda}\cdot\nabla \bar{u}^{w,\lambda}+{\bf B_0}\cdot\nabla \bar{u}^{w,\lambda}+\diver \tilde{F}_1^{w,\lambda}-{\bf B_0}\cdot \nabla \bar{u}^{h,w,\lambda},  \\[1mm]
	\diver\, \bar{u}^{w,\lambda}=\diver\, \bar{b}^{w,\lambda}=0.
\end{cases}
\end{align*}
The two systems share same initial conditions, which are
\begin{align*}
	\bar{u}_{w,\lambda}(x, 0)&=\bar{u}^{w,\lambda}(x,0)=\nabla^\perp\left( \bar{\phi}^{l,w,\lambda}(x)+\bar{\phi}^{h,w,\lambda}(x,0)\right),\\[1mm]
	\bar{b}_{w,\lambda}(x, 0)&=\bar{b}^{w,\lambda}(x,0)=\nabla^\perp\bar{\phi}^{l,w,\lambda}(x)
\end{align*}

By analyzing the difference between these systems and applying Lemma \ref{two-systems-E-F}, we obtain
\begin{align*}
	&\left(\left\| \left(\bar{u}^{w,\lambda}-\bar{u}_{w,\lambda}\right)(t)\right\|_{L^2}+\left\| \left(\bar{b}^{w,\lambda}-\bar{b}_{w,\lambda}\right)(t)\right\|_{L^2}\right)\\
	&\leq t\left(\max_{0\leq \tau \leq t}\left(\|E_1^{w,\lambda}(\tau)\|_{L^2}^2+\|\tilde{F}_1^{w,\lambda}(\tau)\|_{L^2}^2+\|{\bf B_0}\cdot\nabla \bar{u}^{h,w,\lambda}(\tau)\|_{L^2}^2\right)\right)^{\frac{1}{2}}	\\
	&~~~~~~~~~\times\exp\left(2\max_{0\leq \tau \leq t}\left(\|\nabla \bar{u}^{w,\lambda}(\tau)\|_{L^\infty} + \|\nabla \bar{b}^{w,\lambda}(\tau)\|_{L^\infty}+1\right)\right).
\end{align*}
Combining the estimates \eqref{4.5}, \eqref{4.6}, \eqref{4.8}, and \eqref{4.9}, we establish the fundamental error bound:
\begin{align}
	\big\|(\bar{u}^{w,\lambda} - \bar{u}_{w,\lambda})(t)\big\|_{L^2} + \big\|(\bar{b}^{w,\lambda} - \bar{b}_{w,\lambda})(t)\big\|_{L^2} \leq C\lambda^{-\bar{\sigma}_{s,\delta}},
\end{align}
where the convergence rate is given by
\begin{align*}
	\bar{\sigma}_{s,\delta} \define \min\{s+\delta, \sigma_{s,\delta}\} = \min\{s+\delta, s+1-\delta, 2s-1+\delta\}.
\end{align*}

\subsection{Proof of Theorem \ref{thm}: The Case ${\bf B_0} \neq \mathbf{0}$}\label{sec:4.4}
 Now we finish the proof of Theorem \ref{thm} in the case ${\bf B_0} \neq \mathbf{0}$. We focus on verifying the following two essential points:

{\bf 1. Regularity Condition}

Following the regularity pattern in \eqref{u,b-appro-exact}, we have
\begin{align}\label{0704}
	(s - \bar{\sigma}_{s,\delta}) = \max\{-\delta, s - \sigma_{s,\delta}\} < 0.
\end{align}
In fact, the parameter condition \eqref{eq:parameter_condition} guarantees that there exists  $\delta$ such that \eqref{0704} holds.

{\bf 2. High-Frequency Analysis}

Denote $\Theta_\lambda = \lambda (B_1x_2-B_2x_1)$. From \eqref{u-high-diff},  the oscillatory component satisfies
\begin{align*}
	&\left\|\lambda^{-s-\delta}\phi\left(\dfrac{x_1}{\lambda^\delta}\right)\phi\left(\dfrac{B_1x_2-B_2x_1}{\lambda^\delta}\right)\left[\cos (\Theta_\lambda - t) - \cos(\Theta_\lambda + t)\right]\right\|_{H^s(\mathbb{R}^2)}\\
	&= 2\left\|\lambda^{-s-\delta}\sin(\Theta_\lambda)\sin t\,\phi\left(\dfrac{x_1}{\lambda^\delta}\right)\phi\left(\dfrac{B_1x_2-B_2x_1}{\lambda^\delta}\right)\right\|_{H^s(\mathbb{R}^2)}\\
	&\geq 2B_1|\sin t|\lambda^{-s-\delta}\left\|\phi\left(\dfrac{x_1}{\lambda^\delta}\right)\right\|_{H^s(\mathbb{R})}\left\|\phi\left(\dfrac{x_2}{\lambda^\delta}\right)\sin (\lambda x_2)\right\|_{H^s(\mathbb{R})}\\
	&\geq \tilde{c}|\sin t|,
\end{align*}
which leads to the principal high-frequency component estimate
\begin{align*}
	|\bar{u}^{h,1,\lambda}(t) - \bar{u}^{h,-1,\lambda}(t)|_{H^s} \geq \tilde{c}|\sin t| - C\lambda^{-1-\delta},
\end{align*}
where the constants $\tilde{c}, C > 0$ are independent of $\lambda$.

 Following the similar approach as in subsection \ref{sec:3.4}, we can extend the proof of Theorem \ref{thm} in the case ${\bf B_0} =\mathbf{0}$ to the case  ${\bf B_0} \neq \mathbf{0}$. The details are omitted and the proof of Theorem \ref{thm} is finished.

			%\vspace{2mm}
			%{\bf Conflict of interest:}% The authors declare that they have no conflict of interest.
			%\vspace{2mm}
			
			%\section*{Ackonwledgments}

		%	\section*{Data availability statement}
		%	\,\,\,\,\,\,\,\,Data availability is not applicable to this article as no data sets were created or analysed in this study.

			\end{document}